\let\cite\citep
\newcommand{\printfnsymbol}[1]{%
  \textsuperscript{\@fnsymbol{#1}}%
}
\title{Understanding the Impact of Coalitions between EV Charging Stations
}
\author{
Sukanya Kudva\thanks{Equal contribution (alphabetical order).} 
\thanks{
Chinmay Maheshwari (chinmay\_maheshwari@berkeley.edu), Kshitij Kulkarni (kshitijkulkarni@berkeley.edu), and Shankar Sastry (shankar\_sastry@berkeley.edu) are with the EECS, UC Berkeley. 
Sukanya Kudva (sukanya\_kudva@berkeley.edu) and Anil Aswani (aaswani@berkeley.edu) are with IEOR, UC Berkeley.\\
This material is based upon work supported by the National Science Foundation under Grant No. DGE-2125913 and Grant No. CMMI-1847666, and Collaborative Research: Transferable, Hierarchical, Expressive, Optimal, Robust, Interpretable NETworks (THEORINET) under Grant No. DMS-2031899.} ,
Kshitij Kulkarni$^{*\dagger}$, Chinmay Maheshwari$^{*\dagger}$,\\ Anil Aswani$^{\dagger}$ and Shankar Sastry$^{\dagger}$
}
\date{}
\begin{document}
\maketitle

\begin{abstract}
The rapid growth of electric vehicles (EVs) is driving the expansion of charging infrastructure globally. As charging stations become ubiquitous, their substantial electricity consumption can influence grid operation and electricity pricing. Naturally, \textit{some} groups of charging stations, which could be jointly operated by a company, may coordinate to decide their charging profile. While coordination among all charging stations is ideal, it is unclear if coordination of some charging stations is better than no coordination. In this paper, we analyze this intermediate regime between no and full coordination of charging stations. We model EV charging as a non-cooperative aggregative game, where each station's cost is determined by both monetary payments tied to reactive electricity prices on the grid and its sensitivity to deviations from a desired charging profile. We consider a solution concept that we call $\collusion$-Nash equilibrium, which is tied to a coalition $\collusion$ of charging stations coordinating to reduce their costs. We provide sufficient conditions, in terms of the demand and sensitivity of charging stations, to determine when independent (aka uncoordinated) operation of charging stations could result in lower overall costs to charging stations, coalition and charging stations outside the coalition. Somewhat counter to common intuition, we show numerical instances where allowing charging stations to operate independently is better than coordinating a subset of stations as a coalition. Jointly, these results provide operators of charging stations insights into how to coordinate their charging behavior, and open several research directions.\end{abstract}
\newpage

\section{Introduction}
The proliferation of electric vehicles (EVs) has brought major changes to road transportation. EVs could play a significant role in the transition to a sustainable energy-based future and are projected to surpass traditional internal combustion engine-based vehicles in the coming decades\cite{bloomberg2021hitting}.  
The growth in EVs has led to the genesis of a new industry around building faster and more accessible charging infrastructure \cite{brown2024electric}, comprising several electric vehicles charging companies (EVCCs) such as Tesla, EVgo, and Chargepoint. 
One major challenge associated with the design of charging infrastructure is that charging stations, especially fast-charging stations, draw a considerable amount of electricity when in operation \cite{tesla, ElectrifyAmerica, porshe, EVGo} and may adversely impact the grid infrastructure \cite{ alexeenko2023achieving, lee2014electric, escudero2012charging}. The ideal solution to this challenge is to coordinate the demands from all charging stations to distribute the load on the grid \cite{alexeenko2023achieving, obeid2023learning}. 
However, this is hard to implement in practice due to the high communication and computational costs of such centralized approaches, and concerns about privacy of the information shared by EV charging stations\cite{brown2024electric}. 
Since coordination between all EV charging stations is impractical in most cases, we consider the scenario where only \emph{some} of these stations coordinate and form a \emph{coalition} (refer to Figure \ref{fig:schematic} for a schematic). While it might intuitively seem that some coordination is always better than no coordination, we give a counter-example to show this is not necessarily true. 
Particularly, we study the following question in this paper: 
\begin{quote}
   \textbf{Q:} \textit{
   Could coordination between a few potentially heterogeneous charging stations lead to undesirable consequences?
   }
\end{quote}

\begin{figure}
    \centering
    \includegraphics[width=0.5\textwidth]{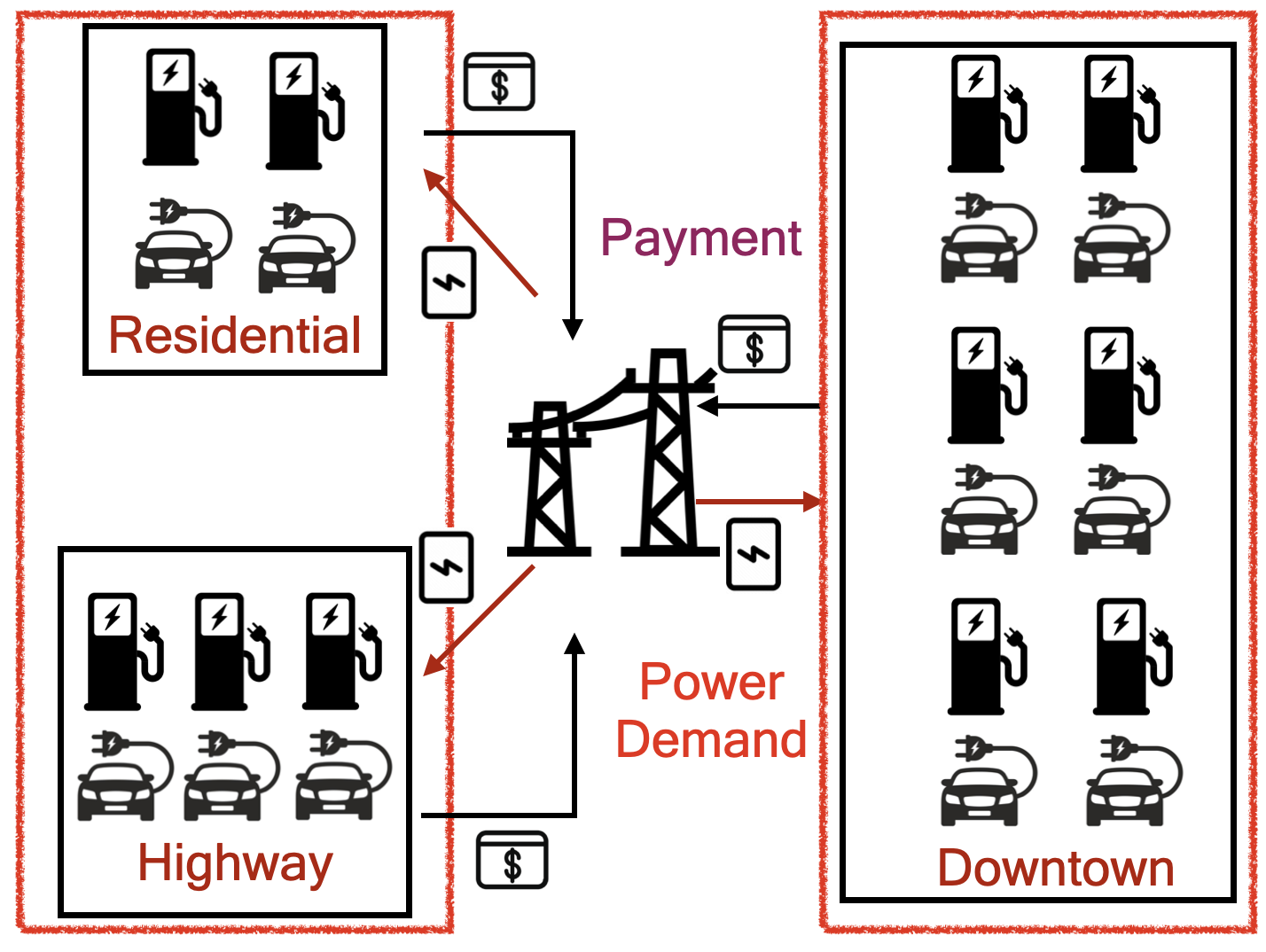}
    \caption{A schematic depiction of charging stations located in different areas (e.g. residential, downtown, highway) and owned by different EVCCs. Charging stations that are contained in the same red box are owned by same company (i.e. form a coalition). Each charging station demands charge from the grid, which determines the prices.}
    \label{fig:schematic}
\end{figure}
To answer this question, we model the interaction between charging station as a non-cooperative aggregative game. 
Charging stations are strategic agents that draw power from the grid over a finite time window, and have different location-specific charging demands and different sensitivities to deviations from a desired operating demand profile. 
Each charging station aims to minimize its cost comprising of \((i)\) a payment for power demanded from the grid, and \((ii)\) the deviations from their desired operating charging profile. In our model, we consider that the price per unit of power depends on the total power demanded by all charging stations (resulting in a game theoretic interaction between charging stations).  Therefore, in our model
charging stations act as ``price-makers", rather than ``price-takers" \cite{ma2011decentralized,gan2012optimal,paccagnan2018efficiency, deori2017connection}. 
Finally, some charging stations enter into a coalition (e.g. those owned by a single EVCC) to coordinate their total demand in order to minimize their total cost.

We compare the outcome of the game-theoretic interaction in two scenarios: \textit{(a)} when some charging stations form a coalition, 
and \textit{(b)} when each charging station operates independently without any coordination.
We differentiate the equilibrium in these two scenarios as $\collusion-$Nash and Nash equilibrium respectively, and characterize them analytically in Theorem \ref{thm: CollusionEquilibrium} and Corollary \ref{cor: NE} respectively. In general, we observe that the equilibrium decomposes into two components: a charging profile uniformly distributed across time and a correction term to account for the coalition of charging stations (Theorem \ref{thm: CollusionEquilibrium}).
We assess the scenarios \textit{(a)-(b)} in terms of three metrics depending on the overall cost experienced by: \(\textit{(i)}\) all charging stations (aka societal cost); \(\textit{(ii)}\) all charging stations within a coalition, and \(\textit{(iii)}\) all charging stations outside of coalition. We identify sufficient conditions on the game parameters under which the formation of a coalition will be worse than the independent operation of charging stations in terms of all three metrics \(\textit{(i)}-\textit{(iii)}\), as presented in Theorem \ref{thm: WelfareComparison} and \ref{theorem: uniform_demand_comparison}.  Furthermore, we present numerical examples that satisfy these conditions, demonstrating that coalitions can lead to worse outcomes for society, the coalition, and non-coalition charging stations alike. Notably, we find that these outcomes persist even when we relax the assumptions in our theoretical results, highlighting scenarios where the conventional intuition about coordinating charging stations does not hold.

\section{Related Works}
\textbf{Aggregative games and EV charging.} 
Several works have proposed EV charging as a non-cooperative game \cite{ma2011decentralized,gan2012optimal,paccagnan2018efficiency, deori2017connection}. These works mainly study existence, uniqueness, and computation of Nash equilibria of the EV charging game, and analyze these equilibria via measures such as the price of anarchy (PoA) \cite{paccagnan2018efficiency}. Our key differentiation from this line of literature is to understand the impact of the formation of a coalition of a subset of EV stations.

\textbf{Coalitions and equilibria.} 
Strong Nash equilibrium is an outcome wherein no coalition of agents can collectively deviate from their strategy to improve their utilities, and was first introduced as a concept in \cite{aumann1959acceptable}.
Since this seminal paper, numerous studies have delved into necessary and sufficient conditions for its existence \cite{nessah2014existence}, computational properties \cite{gatti2017verification}, and its performance across various game classes, often measured by the \emph{k-strong price of anarchy} \cite{ferguson2023collaborative, epstein2007strong, andelman2009strong, feldman2015unified, bachrach2014strong, chien2009strong}. This metric quantifies the worst-case welfare loss at strong equilibrium compared to optimal welfare. Unlike strong Nash equilibrium, our notion of \(\collusion-\)Nash equilibrium (Definition \ref{def: CE}) is a relaxation and only requires stability with respect to a specific coalition \(\collusion\) and not all possible coalitions. Strong Nash equilibrium may not always exist in our setting but we provide an explicit characterization of $\mathcal{C}$-Nash equilibrium (Theorem \ref{thm: CollusionEquilibrium}).

\textbf{Users as \emph{price-takers}.} Several studies have delved into the potential detrimental effects of uncoordinated Electric Vehicle (EV) charging on the power grid \cite{quiros2018electric, gruosso2016analysis}. In response, efforts such as those outlined in \cite{alexeenko2023achieving, obeid2023learning, fu2020intelligent} have tackled the challenge of devising incentive mechanisms to coordinate small users, often categorized as ``price-takers," to shift their charging windows and mitigate grid impacts. However, in contrast to these approaches, our work conceptualizes charging stations, which serve many EVs and thus can aggregate demand, as ``price-makers" within electricity markets.

\section{Model}
\paragraph*{Notations} For any positive integer \(m\), we define \([m]:= \{1,2,...,m\}\). Consider two matrices \(A = (a_{ij})_{i\in [m], j\in [n]}\in \R^{m\times n}, B \in \R^{p\times q}\). We define \(A\otimes B\in \R^{pm\times qn}\) to be the Kronecker product of matrices \(A\) and \(B\). 
For any finite set \(X\), we define \(\mathbf{1}_{X}\in \R^{|X|}\) to be a vector with all entries to be 1. For any vector \(x\in \R^m\) and \(p\subseteq [m]\), we use the notation \(x_{p}\in R^{|p|}\) to denote the components of vector \(x\) corresponding to \(p\). Additionally, we denote \(x_{-p}\in R^{m-|p|}\) to denote the components of vector \(x\) corresponding to all entries that are not in \(p\). 

\paragraph{Charging stations as strategic entities}
Consider a game comprising of \(\playerSet\)  charging stations, each operating as a strategic entity making decisions on their charging levels throughout a charging period spanning \(T\) units of time. Each station \(i\in [N]\) is characterized by a \emph{nominal charging profile} \((\bar{x}_i^t)_{t\in [T]}\), where the total charge demanded is \(d_i\). That is, \(\sum_{t\in [T]}\bar{x}_i^t = d_i\). The variation of charge demanded between stations reflect differences in the number of vehicles typically utilizing each charging facility.

The actual charging profile of any station may differ from the nominal charging profile due to externality imposed by other stations through electricity prices (to be defined shortly). We denote  \({x}_i^t\) to be the charge demanded by station \(i\) during hour \(t\). Define \(X_i = \{x_i\in \R^{T}: \sum_{t\in [T]}x_i^t = d_i\}\) to be the set of feasible charging profiles for station $i$ \footnote{We do not impose non-negativity constraints on the charging profile. This modeling decision is justified, as charging stations have the capability not only to draw power from the grid but also to inject power into the grid when necessary \cite{khan2019fast}. Correspondingly, we also do not impose upper bounds on the charging profiles; we assume that there is always enough charge available to meet the charging demand for a given problem instance. It is an interesting direction of future research to impose additional constraints on the set \(X\) which align more closely with real-world conditions. 
}. With a slight abuse of notation, we define 
\(x^t\in \R^{N}\) as a vector of charging profile of all stations at time step \(t\in [T]\), and \(x = (x_i^t)_{t\in T, i\in \playerSet}\) to be the joint charging profile of all stations. 

In our model, we consider reactive prices where the price of electricity depends on the total charge demanded. More formally, the cost of incurred by any station \(i\) under a joint strategy \(x\) is represented as: 
\begin{align}\label{eq: cost_player_i}
    c_i(x) = \sum_{t\in [T]} p^t(x) x_i^t + \frac{\mu_i}{2}\|x_i - \bar{x}_i\|^2,
\end{align}
where \((a)\) \(\mu_i>0\) is the sensitivity parameter of station \(i\); \((b)\) for every \(t\in [T]\), \(p^t(x)\) denotes the price per unit of electricity when the joint charging profile of all stations is \(x\). It is through this price signal that the charging profiles of other charging stations impose externality on any station. In what follows we assume that the price function is linear, i.e. \(p^t(x) = a^t+ b^t\mathbf{1}_{N}^\top x^t\) for some \(a^t, b^t > 0\) (\cite{vaya2015price, paccagnan2018efficiency, deori2017connection}). The parameters \(a^t\) represents the price fluctuations due to non-EV demand on the grid. In this paper, we assume that for all time \(t\in [T]\), \(b^t = b\) for some positive scalar \(b\in \R\). 

\begin{remark}
    The heterogeneity in sensitivity parameters $\mu_i$ between different stations can be ascribed to the geographic location (cf. Figure \ref{fig:schematic}). For instance, a station positioned in downtown can exhibit heightened sensitivity in meeting its demand requirements compared to one situated within a residential neighborhood. 
\end{remark}

Nash equilibrium, defined below, is widely used to characterize the interaction in these kinds of charging games \cite{ma2011decentralized,gan2012optimal,paccagnan2018efficiency, deori2017connection}. 
\begin{defn}\label{def: NE}
    A joint charge profile \(x^\ast\) is a \emph{Nash equilibrium} if \(c_i(x_i^\ast,x_{-i}^\ast) \leq c_i(x_i, x_{-i}^\ast)\) for every \(i\in [
N], x_i\in X_i\). 
\end{defn}

\paragraph{Coalition between charging stations}
Coalitions between charging stations can be easily facilitated by the emerging electric vehicle charging companies (EVCCs) (such as Chargepoint, Tesla etc.) who operate multiple charging stations. 
We model that every coalition jointly decides their charging profiles. For ease of exposition, we consider only single coalition\footnote{All results presented in this article can be extended to encompass scenarios involving multiple coalitions.}. 
The goal of the coalition is to choose \(x_{\collusion}\in \prod_{i\in \collusion}X_i\) that  minimizes the coalition's cumulative cost function \(
    c_{\collusion}(x) = \sum_{i\in \collusion}c_i(x).\)
Next, we introduce a natural extension of Definition \ref{def: NE} to examine the outcomes of interactions in the presence of a coalition.
\begin{defn}\label{def: CE}
    A joint charge profile \(x^\dagger\in X\) is a \emph{\(\collusion\)-Nash equilibrium} if \textit{(i)} \(\sum_{i\in \collusion} c_{i}(x^\dagger) \leq  \sum_{i\in \collusion} c_{i}(x'_{\collusion}, x^{\dagger}_{-\collusion}) \) for every \(x'_{\collusion}\in \prod_{j\in \collusion}X_j,\) and \textit{(ii)}  \(c_{i}(x^\dagger) \leq  c_{i}(x'_{i}, x^{\dagger}_{-i})\) for every \(x'_{i}\in X_i, i\not\in \collusion.\)
\end{defn}
When \(|\collusion|=1\), Definition \ref{def: NE} and \ref{def: CE} are equivalent.
For the rest of the article, we shall denote \(\collusion-\)Nash equilibrium by \(x^\dagger\) and Nash equilibrium when charging stations act autonomously by \(x^\ast\).

\section{Results}
In Section \ref{ssec: Analytical}, we analytically characterize the \(\collusion-\)Nash equilibrium in terms of game parameters. Next, in Section \ref{ssec: WhenCNashBetter}, we derive sufficient conditions when Nash equilibrium is preferred over \(\collusion-\)Nash equilibrium, in terms of overall cost experienced by all charging stations, by charging stations within the coalition, and the charging stations outside the coalition. Finally, in Section \ref{ssec: CoalitionBenefit},we provide numerical instances where coalition is worse than Nash equilibrium. 

\subsection{Analytical Characterization of \(\collusion-\)Nash equilibrium}\label{ssec: Analytical}

\begin{theorem}\label{thm: CollusionEquilibrium}
    For any arbitrary coalition \(\mathcal{C}\subset[N]\), when $b>0$ and $\mu_i > 0 \; \forall i\in \playerSet$, the \(\collusion-\)Nash equilibrium exists, is unique, and takes the following form
    \begin{align}\label{eq: ColEq}
         x^{\dagger t} =&\notag \frac{d}{T} + \sum_{t'\in [T]} \frac{T \delta^{tt'}-1}{T} \left( b(\mathbf{1}_{\playerSet}\mathbf{1}_{\playerSet}^\top + \textbf{C}) + \mu \right)^{-1} \cdot \\&\hspace{5cm}\cdot \left(\mu \bar{x}^{t'} - a^{t'}\mathbf{1}_{\playerSet}\right),
    \end{align}
    where $\delta^{tt'}$ is the Kronecker delta function ($\delta^{tt'} = 1$ when $t = t'$ and is $0$ otherwise), $\mu= \textsf{diag}([\mu_1, \cdots,\mu_{\playerSet}])$ and \(
         \textbf{C} = \begin{pmatrix}        \mathbf{1}_\collusion\mathbf{1}_\collusion^\top & 0 \\ 
        0 & I_{\playerSet\backslash \collusion}
    \end{pmatrix}.
    \)
\end{theorem}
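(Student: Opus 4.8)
The plan is to turn the equilibrium conditions into a single linear system via first-order optimality and then solve that system in closed form. First I would note that, at a candidate profile $x^\dagger$, being a $\mathcal{C}$-Nash equilibrium means two things: $x^\dagger_{\mathcal{C}}$ minimizes $\sum_{i\in\mathcal{C}} c_i(\cdot, x^\dagger_{-\mathcal{C}})$ over the affine set $\prod_{i\in\mathcal{C}} X_i$, and for each $i\notin\mathcal{C}$, $x^\dagger_i$ minimizes $c_i(\cdot, x^\dagger_{-i})$ over $X_i$. Each of these is an equality-constrained quadratic program, and each is \emph{strictly} convex in the relevant block: in $c_i$ (with others fixed) the coupling-in-own-variable term contributes $b\sum_t (x_i^t)^2$ and the penalty contributes $\tfrac{\mu_i}{2}\|x_i\|^2$ with $\mu_i>0$, while the coalition objective picks up $b\sum_t (\mathbf 1_{\mathcal{C}}^\top x_{\mathcal{C}}^t)^2 + \sum_{i\in\mathcal{C}}\tfrac{\mu_i}{2}\|x_i\|^2$; in both cases the remaining terms are affine in the block. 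Hence KKT stationarity (with one Lagrange multiplier per station for its sum-to-$d_i$ constraint) is both necessary and sufficient for block-optimality, and $x^\dagger$ is a $\mathcal{C}$-Nash equilibrium if and only if all these stationarity equations hold simultaneously at $x^\dagger$.

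Next I would assemble the stationarity equations. The ``price-maker'' structure is the point to get right: differentiating $\sum_{t'} p^{t'}(x)x_i^{t'}$ in $x_i^t$ yields $a^t + b\mathbf 1_N^\top x^t + b x_i^t$ for an outside station $i$, whereas for $i\in\mathcal{C}$ one differentiates $\sum_{k\in\mathcal{C}} c_k$ and the ``own effect'' becomes $b\sum_{k\in\mathcal{C}} x_k^t$. Adding the penalty gradient $\mu_i(x_i^t-\bar x_i^t)$ and setting the result equal to the multiplier $\lambda_i$, the outside block contributes an extra $I_{N\setminus\mathcal{C}}$ and the coalition block an extra $\mathbf 1_{\mathcal{C}}\mathbf 1_{\mathcal{C}}^\top$ — exactly the matrix $\textbf{C}$. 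Writing $M := b(\mathbf 1_N\mathbf 1_N^\top + \textbf{C}) + \mu$, the stacked conditions read $M x^t = \mu\bar x^t - a^t\mathbf 1_N + \lambda$ for every $t\in[T]$, where $\lambda\in\mathbb{R}^N$ is the vector of station multipliers, constant in $t$ because each sum constraint couples time steps only within a station.

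The matrix $M$ is symmetric positive definite ($b>0$, $\mathbf 1_N\mathbf 1_N^\top\succeq 0$, $\textbf{C}\succeq 0$, $\mu\succ 0$), hence invertible, so $x^t = M^{-1}(\mu\bar x^t - a^t\mathbf 1_N + \lambda)$. Summing over $t$ and using $\sum_t x^t = d = \sum_t \bar x^t$ gives $M^{-1}(\mu d - (\textstyle\sum_{t'} a^{t'})\mathbf 1_N) + T M^{-1}\lambda = d$, which determines $\lambda$ uniquely; substituting back and simplifying with $M^{-1}(M-\mu)d = d - M^{-1}\mu d$ and $\mu d - (\sum_{t'}a^{t'})\mathbf 1_N = \sum_{t'}(\mu\bar x^{t'} - a^{t'}\mathbf 1_N)$ collapses the expression into $\tfrac{d}{T} + \sum_{t'}\tfrac{T\delta^{tt'}-1}{T}M^{-1}(\mu\bar x^{t'} - a^{t'}\mathbf 1_N)$, which is \eqref{eq: ColEq}. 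Uniqueness is then immediate because every step above is an equivalence, and existence follows since this explicit profile satisfies all the KKT systems and is therefore a $\mathcal{C}$-Nash equilibrium.

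\textbf{Main obstacle.} There is no deep difficulty here; the work is in the bookkeeping. The one place errors could hide is the gradient computation that produces the \emph{asymmetric} block structure of $\textbf{C}$ (identity on the non-coalition coordinates from each station's own price impact, rank-one block on the coalition coordinates from the internalized cross-impacts), together with the linear-algebra manipulation that eliminates $\lambda$ and rewrites the answer with the $\tfrac{T\delta^{tt'}-1}{T}$ weights. (One could instead obtain existence and uniqueness abstractly from strong monotonicity of the concatenated pseudo-gradient operator, but the explicit computation is needed anyway to produce \eqref{eq: ColEq}.)
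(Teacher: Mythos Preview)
Your proposal is correct and follows essentially the same route as the paper: write the $\mathcal{C}$-Nash conditions as KKT systems for the coalition's and the outsiders' strictly convex block problems, assemble them into a linear system with coefficient matrix $M=b(\mathbf 1_N\mathbf 1_N^\top+\textbf{C})+\mu$, eliminate the multipliers using the demand constraints, and read off \eqref{eq: ColEq}. The paper differs only in packaging: it stacks all time steps at once via the Kronecker-product matrix $\Theta=I_T\otimes M$ and invokes strong monotonicity of the game operator (positive definiteness of the Jacobian, their Lemma~\ref{prop:inv}) to secure existence/uniqueness before solving, whereas you exploit the per-$t$ separability directly and get uniqueness from strict convexity of each block subproblem together with the invertibility of $M$; your route is a bit more elementary and avoids the Kronecker bookkeeping, while the paper's monotone-game framing would generalize more readily to other cost structures.
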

\begin{proof}
Before presenting the proof, we define some useful notation. Define \( A =[a^1, a^2,...,a^T]^\top\) and the concatenated vector of charging profiles \(x^{\dagger} := [x_1^{1^{\dagger}},\cdots, x_{\playerSet}^{1^{\dagger}}, \cdots \cdots, x_1^{t^{\dagger}}, \cdots,x_{\playerSet}^{t^{\dagger}}]^\top\). 

First, we show that for any coalition $\mathcal{C}\subset[N]$, the resulting game is a strongly monotone game. To ensure this it is sufficient to verify that \((i)\) the strategy set is convex, and \((ii)\) the game Jacobian is positive definite \cite{facchinei2003finite}. The convexity of strategy sets hold because the strategy sets \(X_i\) are simplices. Next, to compute the game Jacobian, we define \( \mathcal{G}_i(x) =  \frac{\partial c_\collusion(x)}{\partial x_i} \) if \(i\in \collusion,\) and \(\mathcal{G}_i(x) =  \frac{\partial c_i(x)}{\partial x_i}\) if \(i\not \in \collusion\). 

It can be verified that the game Jacobian, \(J(x)\), is \(
    J(x) = \nabla \mathcal{G}(x) = \Theta,
\)
where \(
    \Theta :=  I_T\otimes \left(b(\mathbf{1}_{\playerSet}\mathbf{1}_{\playerSet}^\top+\textbf{C}) + \mu \right)\), which is guaranteed to be a positive definite matrix by Lemma \ref{prop:inv}. Thus, for any $\mathcal{C}$, the game is strongly monotone, and the equilibrium is unique.

We now introduce two optimization problems:
\begin{align*}
P_\mathcal{C}( x_{-\mathcal{C}}^{\dagger}): \min_{x_{\mathcal{C}}\in X_{\collusion}} & \; \sum_{j \in \mathcal{C}} c_j(x_{\mathcal{C}}, x_{-\mathcal{C}}^{\dagger})  \\
\text { s.t. } & \sum_{t'\in [T]}x_j^{t'} = d_j \quad \forall j \in \mathcal{C}, \\
\quad \forall \ i\in [N]\backslash \collusion, ~P_{i}(x^{\dagger}_{-i} ): \min_{x_i\in X_i} & \; c_i(x_i, x_{-i}^{\dagger})  \\
\text { s.t. } & \sum_{t'\in [T]}x_i^{t'} = d_i.  
\end{align*}
By definition of \(\collusion-\)Nash equilibrium, charging stations within $\mathcal{C}$ jointly solve the optimization problem $P_\mathcal{C}( x_{-\mathcal{C}}^{\dagger})$ when the charging profiles of other stations $x_{-\mathcal{C}}^{\dagger}$ is known. Similarly, each player $i \in [N] \backslash \mathcal{C}$ solves the optimization problem $P_{i}(x^{\dagger}_{-i} )$ when the charging profile of other players $x_{-i}^{\dagger}$ is known. Note that each of the optimization problems has linear constraints, and hence Karush–Kuhn–Tucker (KKT) conditions are necessary for optimality. We solve the KKT conditions of all these problems simultaneously to get a unique solution. Since we established that the equilibrium is unique, this unique solution to the KKT conditions is the \(\collusion-\)Nash equilibrium.

The KKT conditions can be written in a compact form as in (\ref{eq: MatrixEq}), where $\lambda = (\lambda_i)_{i\in [N]}$ are the Lagrange multipliers associated with the linear constraint of every charging station.
\begin{subequations}\label{eq: MatrixEq}
    \begin{align}
        &\Theta x^{\dagger} = (I_T \otimes \mu) \bar{x} - (I_T \otimes \mathbf{1}_{\playerSet}) A - (\mathbf{1}_T \otimes I_{\playerSet}) \lambda
    \\
       &( \mathbf{1}_T^\top \otimes I_\playerSet) x^{\dagger} = d,
    \end{align}
\end{subequations}
where $\Gamma :=(\mathbf{1}_T^\top \otimes I_{\playerSet})\Theta^{-1} (\mathbf{1}_T\otimes I_\playerSet)$. We prove in Lemma \ref{prop:inv} that $\Gamma$ and $\Theta$ are invertible. Using this fact and solving \eqref{eq: MatrixEq} by eliminating $\lambda$, we obtain a closed-form expression for the  \(\collusion-\)Nash equilibrium in (\ref{eqnsol}).
    \begin{align} \label{eqnsol}
    x^\dagger =& \Psi_1(\Gamma, \Theta) \left((I_T \otimes \mu) \bar{x} - (I_T \otimes \mathbf{1}_{\playerSet}) A\right) + \Psi_2(\Gamma, \Theta) d,
    \end{align}
with $\Psi_1(\Gamma, \Theta) = \left(I_{NT} - \Theta^{-1}(\mathbf{1}_T \otimes I_{\playerSet})\Gamma^{-1} (\mathbf{1}_T^\top \otimes I_{\playerSet}) \right)\Theta^{-1}$ and $\Psi_2(\Gamma, \Theta) = \Theta^{-1}(\mathbf{1}_T \otimes I_{\playerSet})\Gamma^{-1}$. Expanding the terms using the time index, it can be checked that equation (\ref{eqnsol}) is equivalent to (\ref{eq: ColEq}), completing the proof.
\end{proof}
Next, we present a technical result, used in proof of Theorem \ref{thm: CollusionEquilibrium}, the proof of which is deferred to Appendix \ref{app: proof_lemma}.
\begin{lemma} \label{prop:inv}
$\Theta$ and $\Gamma$ are positive definite and hence invertible matrices.
\end{lemma}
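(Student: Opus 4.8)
The plan is to reduce the lemma to a single positive‑definiteness statement about the $\playerSet\times\playerSet$ matrix
$M := b(\mathbf{1}_{\playerSet}\mathbf{1}_{\playerSet}^\top + \textbf{C}) + \mu$, and then propagate that through the Kronecker structure of $\Theta$ and the definition of $\Gamma$.

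First I would show $M$ is positive definite by writing it as $\mu + b\,\mathbf{1}_{\playerSet}\mathbf{1}_{\playerSet}^\top + b\,\textbf{C}$ and arguing term by term: $\mu$ is diagonal with entries $\mu_i>0$, hence positive definite with $v^\top\mu v \ge (\min_i\mu_i)\norm{v}^2$; the rank‑one term $b\,\mathbf{1}_{\playerSet}\mathbf{1}_{\playerSet}^\top$ is positive semidefinite since $b>0$; and $\textbf{C}$ is block‑diagonal with one block the all‑ones matrix $\mathbf{1}_\collusion\mathbf{1}_\collusion^\top$ (positive semidefinite) and the other block $I_{\playerSet\setminus\collusion}$ (positive definite), so $b\,\textbf{C}\succeq 0$. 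Summing, $v^\top M v \ge (\min_i\mu_i)\norm{v}^2 > 0$ for every $v\neq 0$, so $M\succ 0$; in particular $M$ is invertible and $M^{-1}$ is positive definite.

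Next, since $\Theta = I_T\otimes M$, positive definiteness of $\Theta$ follows either from the fact that a Kronecker product of positive definite matrices is positive definite, or directly: writing $w=(w^1,\dots,w^T)$ one has $w^\top\Theta w=\sum_{t\in[T]}(w^t)^\top M w^t>0$ whenever $w\neq 0$. For $\Gamma$, I would use $\Theta^{-1}=I_T\otimes M^{-1}$ together with the mixed‑product property of the Kronecker product to collapse $\Gamma=(\mathbf{1}_T^\top\otimes I_{\playerSet})(I_T\otimes M^{-1})(\mathbf{1}_T\otimes I_{\playerSet})=(\mathbf{1}_T^\top\mathbf{1}_T)\,M^{-1}=T\,M^{-1}$, which is positive definite because $M^{-1}$ is and $T>0$. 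Positive definiteness yields invertibility in both cases.

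The computations are all routine; the only points requiring a little care are (i) making sure that strict positivity is carried entirely by $\mu$, so the argument is unaffected by $\mathbf{1}_{\playerSet}\mathbf{1}_{\playerSet}^\top$ and $\textbf{C}$ being merely semidefinite (and is robust to degenerate choices of $\collusion$, e.g. $\collusion=\emptyset$ or $\collusion=[\playerSet]$), and (ii) applying the Kronecker mixed‑product identity correctly to reduce $\Gamma$ to $T M^{-1}$. I do not anticipate a genuine obstacle.
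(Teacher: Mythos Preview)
Your proposal is correct and follows essentially the same route as the paper: reduce everything to the positive definiteness of the $\playerSet\times\playerSet$ block $M=Q=b(\mathbf{1}_{\playerSet}\mathbf{1}_{\playerSet}^\top+\textbf{C})+\mu$, then push this through the Kronecker structure to get $\Theta\succ 0$ and $\Gamma=T M^{-1}\succ 0$. The only cosmetic difference is that the paper verifies $Q\succ 0$ by expanding the quadratic form explicitly, whereas you argue more modularly via $\mu\succ 0$ plus two PSD corrections; both are equivalent and equally routine.
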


We now make some remarks about Theorem \ref{thm: CollusionEquilibrium}. 
\begin{remark} The closed-form equilibrium solution in (\ref{eq: ColEq}) is comprised of a fixed term and a correction term.
The fixed term represents a charging profile that is uniform across time. The second term is the correction to account for the aggregate effects of the coalition, demand and charging preferences of the charging stations, and exogenous non-EV demand. As expected, when the time-dependent factors $a^{t'}$ and $ \bar{x}^{t'}$ are constant across time, the correction term is zero and charging stations charge uniformly across time.

\end{remark}
\begin{remark}
    The result in Theorem \ref{thm: CollusionEquilibrium} extends to the setting of multiple (non-overlapping) coalitions \(\collusion_1, \collusion_2, ...\collusion_K\) by setting
     \begin{align*}
         \textbf{C} = \begin{pmatrix}        \mathbf{1}_{\collusion_1}\mathbf{1}_{\collusion_1}^\top & 0 & \cdots & 0 \\ 0& \mathbf{1}_{\collusion_2}\mathbf{1}_{\collusion_2}^\top & \cdots & 0 \\ \vdots & \vdots & \ddots  & \vdots\\
        \vdots & \vdots & \mathbf{1}_{\collusion_K}\mathbf{1}_{\collusion_K}^\top  & \vdots\\  0& 0& 0 & I_{\playerSet\backslash \cup_{i=1}^{K}\collusion_i}
    \end{pmatrix}.
    \end{align*}
    In fact, all theoretical results in this article can be directly extended to the multiple coalition case by using the above \(\textbf{C}\) matrix. For the sake of clear presentation, we shall always work with \(K=1\) in the following text. 
\end{remark}
We conclude this section by stating the following specialization of Theorem \ref{thm: CollusionEquilibrium} that characterizes Nash equilibrium. 
\begin{corollary}\label{cor: NE}
    The Nash equilibrium \(x^\ast\) takes the following form:    \begin{align*}
         x^{\ast t} =& \frac{d}{T} + \sum_{t'\in [T]} \frac{T \delta^{tt'}-1}{T} \left( b(\mathbf{1}_{\playerSet}\mathbf{1}_{\playerSet}^\top + I_N) + \mu \right)^{-1}\cdot \left(\mu \bar{x}^{t'} - a^{t'}\mathbf{1}_{\playerSet}\right),
    \end{align*}
    where \(I_N\in \mathbb{R}^{N\times N}\) is the identity matrix. 
\end{corollary}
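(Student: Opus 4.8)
The plan is to obtain Corollary \ref{cor: NE} as the degenerate case of Theorem \ref{thm: CollusionEquilibrium} in which no nontrivial coalition is present. The starting observation is that the Nash equilibrium of Definition \ref{def: NE} coincides with the \(\collusion\)-Nash equilibrium of Definition \ref{def: CE} when \(\collusion=\emptyset\) (equivalently, when every coalition is a singleton) — this is exactly the equivalence recorded immediately after Definition \ref{def: CE}. Hence it suffices to track what the data of Theorem \ref{thm: CollusionEquilibrium} specialize to in this case and substitute into \eqref{eq: ColEq}.

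First I would check that with \(\collusion=\emptyset\) (or, in the multiple-coalition formulation, each \(\collusion_k\) a singleton), the matrix \(\textbf{C}\) in Theorem \ref{thm: CollusionEquilibrium} reduces to the identity: the block \(\mathbf{1}_{\collusion}\mathbf{1}_{\collusion}^\top\) is either vacuous or the scalar \(1\), and the complementary block is \(I_{\playerSet\setminus\collusion}\), so altogether \(\textbf{C}=I_{\playerSet}\). Equivalently, in the proof of Theorem \ref{thm: CollusionEquilibrium} the pseudo-gradient map becomes \(\mathcal{G}_i(x)=\partial c_i(x)/\partial x_i\) for every \(i\in[\playerSet]\), which is precisely the pseudo-gradient of the uncoordinated game underlying Definition \ref{def: NE}, so the game Jacobian is \(\Theta = I_T\otimes\bigl(b(\mathbf{1}_{\playerSet}\mathbf{1}_{\playerSet}^\top+I_{\playerSet})+\mu\bigr)\).

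Next I would simply invoke Theorem \ref{thm: CollusionEquilibrium} with this \(\textbf{C}\). Its hypotheses (\(b>0\), \(\mu_i>0\)) are unchanged, so existence and uniqueness hold, and Lemma \ref{prop:inv} guarantees that \(\Theta\) and \(\Gamma\) are positive definite, hence invertible, for this choice of \(\textbf{C}\) as well. Substituting \(\textbf{C}=I_{\playerSet}\) into the closed-form expression \eqref{eq: ColEq} replaces \(b(\mathbf{1}_{\playerSet}\mathbf{1}_{\playerSet}^\top+\textbf{C})+\mu\) by \(b(\mathbf{1}_{\playerSet}\mathbf{1}_{\playerSet}^\top+I_N)+\mu\), yielding exactly the formula stated in Corollary \ref{cor: NE}.

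There is no substantive obstacle here — the statement is a pure specialization — so the only point requiring a little care is the bookkeeping in the first step: verifying that ``no coordination'' is genuinely the \(\textbf{C}=I_{\playerSet}\) instance of the coalitional game, i.e.\ that collapsing every coalition to a singleton in the construction of \(\mathcal{G}\) reproduces the individual best-response (KKT) conditions of Definition \ref{def: NE}. Once that identification is made, the remainder is a one-line substitution into \eqref{eq: ColEq}.
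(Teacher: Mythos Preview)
Your proposal is correct and matches the paper's own argument: the paper simply notes that Corollary~\ref{cor: NE} follows by taking \(\collusion=\{1\}\) in Theorem~\ref{thm: CollusionEquilibrium}, which (like your choice \(\collusion=\emptyset\)) makes \(\textbf{C}=I_{\playerSet}\) and reduces \eqref{eq: ColEq} to the stated formula. Your additional remarks on the pseudo-gradient and on Lemma~\ref{prop:inv} are correct but more than the paper bothers to spell out.
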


Proof of Corollary \ref{cor: NE} follows by setting \(\collusion=\{1\}\) in Theorem \ref{thm: CollusionEquilibrium}.
\subsection{When is \(\collusion-\)Nash equilibrium beneficial?}\label{ssec: WhenCNashBetter}
In this subsection, we study conditions under which \(\collusion-\)Nash equilibrium will be preferred over Nash equilibrium and vice-versa. For a charging profile $x$, define the total cost incurred by a group of charging stations in $\mathcal{S} \subseteq [\playerSet]$ as
\begin{align}\label{eq:totalcost}
    c_{\mathcal{S}}(x) := \sum_{i\in \mathcal{S}}c_i(x), \quad \forall \ x\in X.
\end{align}
In order to compare the outcome under Nash equilibrium and \(\collusion-\)Nash equilibrium, we use the following metrics:
\begin{align}\label{eq: Eval-Metric}
\underbrace{\frac{c_{[\playerSet]}(x^\ast)}{c_{[\playerSet]}(x^\dagger)}}_{:=\textsf{M}_{[\playerSet]}}, \quad \underbrace{\frac{c_{\mathcal{C}}(x^\ast)}{c_{\mathcal{C}}(x^\dagger)}}_{:=\textsf{M}_{\mathcal{C}}}, \quad \underbrace{\frac{c_{[N]\backslash\collusion}(x^\ast)}{c_{[N]\backslash\collusion}(x^\dagger)}}_{:=\textsf{M}_{[N]\backslash\collusion}} \tag{Eval-Metric},
\end{align}
where, recall, \(x^\ast\) is the Nash equilibrium when there is no coalition, and \(x^\dagger\) is the \(\mathcal{C}\)-Nash equilibrium. 
These metrics are the ratio of the total cost experienced by different groups of players at Nash equilibrium and at \(\collusion-\)Nash equilibrium. 
In particular, \(\textsf{M}_{[N]}\) is this ratio for all players, \(\textsf{M}_{\collusion}\) is this ratio for players within coalition \(\collusion\), and \(\textsf{M}_{[N]\backslash\collusion}\) is this ratio of players outside of the coalition.
For any \(\mathcal{S}\in \{[N], \collusion, [N]\backslash\collusion\}\), if \(\textsf{M}_{\mathcal{S}}<1\) then Nash equilibrium is preferred, otherwise \(\collusion-\)Nash equilibrium is preferred by the coalition \(\mathcal{S}\). Next, we theoretically characterize these metrics in two cases:
\subsubsection{Case A: Exogeneous price fluctuations $a^t$ are uniform across time}
Here, we study the case when \(a^t = a \; \forall t \in[T]\) for some \(a\in \mathbb{R}\). This can happen when the price is influenced by a constant non-EV demand throughout the day. Under this setting, the equilibria \(x^\dagger\) and \(x^\ast\) are represented below
\begin{prop}\label{prop: Collusion_and_NE_same_a} 
  Suppose \(a^t = a^{t'}\) for all \(t, t
'\in [T]\). Then for every \(t\in [T]\),
\begin{align*}
x^{\ast t} &= \frac{d}{T} + \sum_{t'\in [T]} \frac{T \delta^{tt'}-1}{T} \left( b(\mathbf{1}_{\playerSet}\mathbf{1}_{\playerSet}^\top + I_N) + \mu \right)^{-1} \mu \bar{x}^{t'},\\
    x^{\dagger t} &= \frac{d}{T} + \sum_{t'\in [T]} \frac{T \delta^{tt'}-1}{T} \left( b(\mathbf{1}_{\playerSet}\mathbf{1}_{\playerSet}^\top + {C}) + \mu \right)^{-1} \mu \bar{x}^{t'}.
\end{align*}  
\end{prop}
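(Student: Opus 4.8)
The plan is to obtain both identities by direct substitution into the closed-form expressions already established, exploiting a cancellation that occurs precisely because the coefficients $\frac{T\delta^{tt'}-1}{T}$ sum to zero over $t'$. Concretely, I would start from equation \eqref{eq: ColEq} in Theorem \ref{thm: CollusionEquilibrium} and split its correction term into the part driven by the charging preferences, $\mu\bar x^{t'}$, and the part driven by the exogenous price term, $a^{t'}\mathbf{1}_{\playerSet}$:
\begin{align*}
x^{\dagger t} = \frac{d}{T} + \sum_{t'\in[T]}\frac{T\delta^{tt'}-1}{T}\,M^{-1}\mu\bar x^{t'} \;-\; \sum_{t'\in[T]}\frac{T\delta^{tt'}-1}{T}\,M^{-1}\,a^{t'}\mathbf{1}_{\playerSet},
\end{align*}
where $M := b(\mathbf{1}_{\playerSet}\mathbf{1}_{\playerSet}^\top+\textbf{C})+\mu$, which is invertible by Lemma \ref{prop:inv}.

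Next I would use the hypothesis $a^{t'}=a$ for all $t'\in[T]$ to pull the (now $t'$-independent) quantity $M^{-1}a\mathbf{1}_{\playerSet}$ out of the second sum, leaving the scalar factor $\sum_{t'\in[T]}\frac{T\delta^{tt'}-1}{T}$. Since $\sum_{t'\in[T]}\delta^{tt'}=1$ and $\sum_{t'\in[T]}1=T$, this factor equals $\frac{1}{T}(T-T)=0$, so the entire $a$-dependent correction vanishes and we are left with exactly the claimed expression for $x^{\dagger t}$. The expression for the Nash equilibrium $x^{\ast t}$ then follows by the identical computation applied to Corollary \ref{cor: NE} (equivalently, by specializing $\textbf{C}=I_N$, i.e. $\collusion=\{1\}$), which replaces $M$ by $b(\mathbf{1}_{\playerSet}\mathbf{1}_{\playerSet}^\top+I_N)+\mu$.

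I do not expect any real obstacle here: the argument is a one-line cancellation once the closed forms of Theorem \ref{thm: CollusionEquilibrium} and Corollary \ref{cor: NE} are in hand. The only point requiring a moment's care is the bookkeeping of the Kronecker-delta coefficients — verifying that $\sum_{t'\in[T]}(T\delta^{tt'}-1)=0$ for each fixed $t$ — and observing that the matrix $M$ does not depend on $t'$, so it genuinely commutes past the summation. Everything else is immediate from the already-proven invertibility in Lemma \ref{prop:inv}.
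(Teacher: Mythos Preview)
Your proposal is correct and matches the paper's intended argument: the proposition is stated without a standalone proof precisely because it follows immediately from Theorem~\ref{thm: CollusionEquilibrium} and Corollary~\ref{cor: NE} via the cancellation $\sum_{t'\in[T]}(T\delta^{tt'}-1)=0$ that you identify. There is nothing to add.
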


{Further, consider the scenario when all the charging stations prepare for similar peak and low demand hours, and use charging rate recommendations from EV manufacturers to predict their demand requirements.} That is, they desire similar demand profiles (up to a constant factor) across the day. This scenario is captured in Assumption \ref{assm: DecomposeDesiredDemand}.
  
\begin{assm}\label{assm: DecomposeDesiredDemand}
  The {desired demand} of each charging station \( i\in [N]\) at time step \(t\in [T]\) is \(
    \bar{x}_i^t = d_i \alpha^t,\)
where \(\sum_{t\in [T]}\alpha^t = 1\) and \(\alpha^t\geq 0\) for all \(t\in [T]\). 
\end{assm}

{
Next, we delineate conditions under which the formation of a coalition is worse than the independent operation of charging stations in terms of verifiable conditions on game parameters. 
}

\begin{theorem}\label{thm: WelfareComparison}
   Suppose Assumption \ref{assm: DecomposeDesiredDemand} holds and \(a^t = a^{t'}\) for all \(t, t
'\in [T]\). Any {group} $\mathcal{S}\subseteq [N]$ incurs a lower cost in Nash equilibrium compared to \(\collusion-\)Nash equilibrium iff
\begin{align}\label{eq: Condition_a_unif}
    \sum_{i\in \mathcal{S}}f^\dagger_i(\mu,b,d) - f^{\ast}_i(\mu,b, d) \geq 0,
\end{align}
where for every \(i\in [N],\) \(f^{\dagger}_i(\mu,b,d) := \Delta^\dagger \Delta_i^\dagger+ \frac{\mu_i(\Delta_i^\dagger-d_i)^2}{2b}, f^{\ast}_i(\mu,b,d) :=\Delta^\ast \Delta_i^\ast + \frac{\mu_i(\Delta_i^\ast-d_i)^2}{2b}, {\Delta}_i^\dagger := \left((b(1_N1_N^\top+ \textbf{C})+\mu)^{-1}\mu d\right)_i,\) and \({\Delta}_i^\ast := \left((b(1_N1_N^\top+ I_N)+\mu)^{-1}\mu d\right)_i.\)
        Additionally, $\Delta^\dagger := \sum_{i\in [N]}\Delta^\dagger_i$ and $\Delta^\ast := \sum_{i\in [N]}\Delta_i^\ast$.
\end{theorem}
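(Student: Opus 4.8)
The plan is to prove the claim by direct substitution: insert the closed-form equilibria from Proposition \ref{prop: Collusion_and_NE_same_a} into the cost function \eqref{eq: cost_player_i}, use Assumption \ref{assm: DecomposeDesiredDemand} to collapse the time dependence, and show that each player's equilibrium cost equals a coalition-independent constant plus $b\kappa$ times the quantity $f_i$ appearing in the statement, where $\kappa := \sum_{t\in[T]}(\alpha^t - 1/T)^2$. The desired equivalence then follows by subtracting and dividing out the positive factor $b\kappa$.

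First I would rewrite the equilibria. Let $M^\ast := (b(\mathbf{1}_N\mathbf{1}_N^\top + I_N) + \mu)^{-1}$ and $M^\dagger := (b(\mathbf{1}_N\mathbf{1}_N^\top + \mathbf{C}) + \mu)^{-1}$, so that the vectors $\Delta^\ast := M^\ast\mu d$ and $\Delta^\dagger := M^\dagger\mu d$ have entries equal to the $\Delta_i^\ast,\Delta_i^\dagger$ defined in the theorem. Under Assumption \ref{assm: DecomposeDesiredDemand}, $\bar x^{t'} = \alpha^{t'} d$, hence $\mu\bar x^{t'} = \alpha^{t'}\mu d$, and since $\sum_{t'\in[T]}\alpha^{t'}=1$ one gets $\sum_{t'}\frac{T\delta^{tt'}-1}{T}\alpha^{t'} = \alpha^t - \tfrac1T =: \beta^t$. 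Plugging this into Proposition \ref{prop: Collusion_and_NE_same_a} yields $x_i^{\ast t} = d_i/T + \beta^t\Delta_i^\ast$ and $x_i^{\dagger t} = d_i/T + \beta^t\Delta_i^\dagger$, with $\sum_t\beta^t = 0$ and $\sum_t(\beta^t)^2 = \kappa$.

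Next I would evaluate the cost of a generic profile of the form $x_i^t = d_i/T + \beta^t\Delta_i$, writing $\Delta := \sum_{i\in[N]}\Delta_i$ for the aggregate. The deviation from the desired profile is $x_i^t - \bar x_i^t = \beta^t(\Delta_i - d_i)$, so $\tfrac{\mu_i}{2}\norm{x_i-\bar x_i}^2 = \tfrac{\mu_i}{2}\kappa(\Delta_i-d_i)^2$. The aggregate demand is $\mathbf{1}_N^\top x^t = d_{\mathrm{tot}}/T + \beta^t\Delta$ with $d_{\mathrm{tot}} := \sum_i d_i$, so $p^t = A_0 + b\beta^t\Delta$ with $A_0 := a + b\,d_{\mathrm{tot}}/T$; multiplying by $x_i^t$, summing over $t$, and using $\sum_t\beta^t = 0$ eliminates every term linear in $\beta^t$ and leaves $\sum_t p^t x_i^t = A_0 d_i + b\kappa\,\Delta\Delta_i$. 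Hence $c_i(x) = A_0 d_i + b\kappa(\Delta\Delta_i + \tfrac{\mu_i}{2b}(\Delta_i-d_i)^2)$. Specializing to $x^\dagger$ and to $x^\ast$, and observing that $A_0$, $d_{\mathrm{tot}}$ and $\kappa$ depend only on the primitives $a,b,d,\alpha$ (not on whether the coalition forms), gives $c_i(x^\dagger) - c_i(x^\ast) = b\kappa(f_i^\dagger - f_i^\ast)$. Summing over $i\in\mathcal{S}$, we get $c_{\mathcal{S}}(x^\ast)\le c_{\mathcal{S}}(x^\dagger)$ iff $b\kappa\sum_{i\in\mathcal{S}}(f_i^\dagger - f_i^\ast)\ge 0$, i.e. iff $\sum_{i\in\mathcal{S}}(f_i^\dagger - f_i^\ast)\ge 0$, which is the stated condition.

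The computation itself is routine; the main things to watch are the bookkeeping in the Kronecker/vector algebra and the verification of the two cancellations that make the statement clean — that the affine intercept $A_0$ (which carries $a$) and the scalar $\kappa$ are identical at both equilibria, so that only $b$ and $d$ survive inside $f_i$. One should also flag the degenerate case $\kappa=0$, i.e. $\alpha^t\equiv 1/T$: there $x^\dagger = x^\ast$ and both sides of the inequality are equal for every $\mathcal{S}$, so the ``iff'' is to be read with ``lower'' in the weak sense (equivalently, the biconditional is sharp exactly when $\alpha$ is non-uniform).
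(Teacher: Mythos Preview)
Your proposal is correct and follows essentially the same approach as the paper: substitute the closed-form equilibria from Proposition \ref{prop: Collusion_and_NE_same_a}, use Assumption \ref{assm: DecomposeDesiredDemand} to factor the time dependence into a scalar $\beta^t=\alpha^t-1/T$ (the paper uses $F^t=T\beta^t$), exploit $\sum_t\beta^t=0$ to kill the cross terms, and cancel the common positive factor $b\kappa$ (the paper's $\mathcal{F}/T^2$). Your treatment of the degenerate case $\kappa=0$ is a nice addition that the paper glosses over.
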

\begin{proof}
For a subset $\mathcal{S}$ of charging stations, the total cost under Nash equilibrium is lower than \(\collusion-\)Nash equilibrium if and only if
\begin{align}\label{eq: CompareWelfare}
        \sum_{i \in \mathcal{S}}c_i(x^\ast) \leq \sum_{i \in \mathcal{S}}c_i(x^\dagger).
    \end{align}

In the rest of the proof, we calculate these costs in terms of the game parameters. For every  \(t\in [T]\),  define \(
    F^t := T\alpha^t - \sum_{t'\in [T]}\alpha^{t'}.
\) Since $\sum_{t\in[T]} \alpha^t = 1$, it holds that \( F^t = T\alpha^t - 1 \) and \( \sum_{t\in [T]}F^t = 0 \). Using Assumption \ref{assm: DecomposeDesiredDemand} along with Proposition \ref{prop: Collusion_and_NE_same_a}, we get the following for \(\collusion-\)Nash equilibrium:
\begin{equation}
\begin{aligned}
    x^{\dagger t}_i =   \frac{d_i}{T}+ \frac{F^t}{T} \Delta_i^\dagger 
    \label{eq: xdag_unifa}, \quad \quad 
    \mathbf{1}_N^\top x^{\dagger t } = \frac{D}{T} + \frac{F^t}{T}\Delta^\dagger \\
    x^{\dagger t}_i - \bar{x}_i^t
    = \frac{F^t}{T} (\Delta_i^\dagger -d_i), 
\end{aligned}
\end{equation}
where   \(D = \sum_{i\in [N]}d_i\). 
Using \eqref{eq: cost_player_i} and \eqref{eq: xdag_unifa} and \(\sum_{t\in [T]}F^t = 0\), the cost of station \(i\) at \(\collusion-\)Nash equilibrium is:
\begin{align*}
c_i(x^\dagger) &=  ad_i + \left(b\sum_{t\in [T]}(1_N^\top x^{ t \dagger})x_i^{t \dagger}\right) + \frac{\mu_i}{2}\|x_i^\dagger-\bar{x}_i\|^2 \\ 
    &= ad_i + \frac{b}{T^2}\left(TDd_i+ \sum_{t\in [T]}(F^t)^2\Delta^\dagger\Delta_i^\dagger\right)
  \\& \quad \quad +  
    \frac{\mu_i}{2T^2}\sum_{t\in [T]} (F^t)^2(\Delta_i^\ast-d_i)^2.
\end{align*}

Consequently, the total cost of charging stations in $\mathcal{S}$ is:
\begin{align*}\label{eq: UniformAlphaCE}
\sum_{i\in \mathcal{S}}c_i(x^\dagger)
    &= \left(aD_{\mathcal{S}} + \frac{bDD_{\mathcal{S}}}{T}\right) \notag \\& + \frac{\mathcal{F}}{T^2}\left(\sum_{i\in \mathcal{S}}b\Delta^\dagger\Delta^\dagger_i+\frac{\mu^i}{2}(\Delta_i^\dagger-d_i)^2\right),
\end{align*}
where \(D_{\mathcal{S}} := \sum_{i\in \mathcal{S}}d_i\), \(\mathcal{F}:= \sum_{t\in [T]}(F^t)^2\). 
Analogously, we can also compute the total cost at Nash equilibrium. 
Using 
these results, we conclude \eqref{eq: CompareWelfare} is equivalent to 
    {
    \begin{align*}
        & aD_{\mathcal{S}} + \frac{bDD_{\mathcal{S}}}{T} + \frac{\mathcal{F}}{T^2}\left(\sum_{i\in \mathcal{S}}b\Delta^\dagger\Delta^\dagger_i+\frac{\mu^i}{2}(\Delta_i^\dagger-d_i)^2\right)\\&\geq  aD_{\mathcal{S}} + \frac{bDD_{\mathcal{S}}}{T} + \frac{\mathcal{F}}{T^2}\left(\sum_{i\in \mathcal{S}}b\Delta^\ast\Delta^\ast_i+\frac{\mu^i}{2}(\Delta_i^\ast-d_i)^2\right)\\
        & {\iff \sum_{i\in \mathcal{S}}f^\dagger_i(\mu,b,d) - f^{\ast}_i(\mu, b,d) \geq 0.}
       \end{align*}}
    \end{proof}
\begin{remark}    Interestingly, the condition \eqref{eq: Condition_a_unif} in Theorem \ref{thm: WelfareComparison} is independent of \(T, \alpha^t, a\) and depends only on \(\mu\) and \(d\). 
\end{remark}
\begin{corollary}
   When \(c_i(x^\dagger), c_i(x^\ast)\geq 0 \; \forall i \in [N]\), Theorem \ref{thm: WelfareComparison} can be used to analyze (\ref{eq: Eval-Metric}) as follows. For any \(\mathcal{S}  \in \{ [N], \mathcal{C}, \mathcal{[N] \backslash \mathcal{C}}\}\), \(\textsf{M}_{\mathcal{S}} \leq 1\) if and only if \(\sum_{i\in \mathcal{S}}f^\dagger_i(\mu,b, d) - f^{\ast}_i(\mu, b, d) \geq 0.\)
   \end{corollary}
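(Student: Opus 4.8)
The plan is to observe that this Corollary is essentially a restatement of Theorem \ref{thm: WelfareComparison}: once we know the denominator $c_{\mathcal{S}}(x^\dagger)$ is nonnegative, the inequality $\textsf{M}_{\mathcal{S}}\le 1$ can be cleared of its denominator without reversing its direction, turning it into the cost comparison \eqref{eq: CompareWelfare} that Theorem \ref{thm: WelfareComparison} already analyzes.

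First I would fix $\mathcal{S}\in\{[N],\mathcal{C},[N]\setminus\mathcal{C}\}$ and recall from \eqref{eq:totalcost}--\eqref{eq: Eval-Metric} that $\textsf{M}_{\mathcal{S}}=c_{\mathcal{S}}(x^\ast)/c_{\mathcal{S}}(x^\dagger)$, where $c_{\mathcal{S}}(y)=\sum_{i\in\mathcal{S}}c_i(y)$. Under the hypothesis $c_i(x^\dagger)\ge 0$ for all $i$, summing over $i\in\mathcal{S}$ gives $c_{\mathcal{S}}(x^\dagger)\ge 0$; the case $c_{\mathcal{S}}(x^\dagger)=0$ makes the ratio undefined (it forces $c_i(x^\dagger)=0$ for all $i\in\mathcal{S}$, a degenerate instance we exclude), so we may assume $c_{\mathcal{S}}(x^\dagger)>0$. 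Dividing both sides of $c_{\mathcal{S}}(x^\ast)\le c_{\mathcal{S}}(x^\dagger)$ by the positive number $c_{\mathcal{S}}(x^\dagger)$ then yields the equivalence
\[
\textsf{M}_{\mathcal{S}}\le 1 \iff \sum_{i\in\mathcal{S}}c_i(x^\ast)\le \sum_{i\in\mathcal{S}}c_i(x^\dagger),
\]
and the right-hand side is exactly condition \eqref{eq: CompareWelfare} appearing in the proof of Theorem \ref{thm: WelfareComparison}.

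To finish, I would invoke Theorem \ref{thm: WelfareComparison}, or more precisely the chain of equivalences in its proof culminating in \eqref{eq: Condition_a_unif}, which shows that \eqref{eq: CompareWelfare} holds if and only if $\sum_{i\in\mathcal{S}}f^\dagger_i(\mu,b,d)-f^{\ast}_i(\mu,b,d)\ge 0$. Composing the two equivalences gives the claim. (The auxiliary hypothesis $c_i(x^\ast)\ge 0$ is not needed for the equivalence itself; it only guarantees $\textsf{M}_{\mathcal{S}}\ge 0$, so that comparison against $1$ is the natural sign test.)

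There is no genuinely hard step here: all the analytic content — expressing the group costs at $x^\ast$ and $x^\dagger$ via $\Delta_i^\ast,\Delta_i^\dagger$ and reducing to the $\mu,b,d$-condition — is already carried out in Theorem \ref{thm: WelfareComparison}. The only point requiring care is the nonvanishing (and sign) of the denominator $c_{\mathcal{S}}(x^\dagger)$, which is precisely what the nonnegativity hypothesis secures and which must be flagged to keep the ``iff'' statement rigorous.
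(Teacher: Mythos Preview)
Your proposal is correct and matches the paper's own treatment: the corollary is stated without proof there, as an immediate consequence of Theorem~\ref{thm: WelfareComparison}, and your argument spells out exactly that implication (clear the positive denominator, then invoke the equivalence \eqref{eq: CompareWelfare}~$\iff$~\eqref{eq: Condition_a_unif}). Your remark that the degenerate case $c_{\mathcal{S}}(x^\dagger)=0$ must be excluded for $\textsf{M}_{\mathcal{S}}$ to be defined is a valid caveat the paper leaves implicit.
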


\subsubsection{Case B: The desired demand \(\bar{x}^t\) is uniform across time}\label{ssec: UniformDemand}
In this subsection, we analyze the setting when \(\bar{x}^t\) is uniform in \(t\). This denotes a uniform spread of desired demand by the stations. 
{This resembles the nominal charging profile when the EV adoption will reach a critical mass when each charging station observes a constant flow of EV such that when averaged over any time window the nominal charge is uniformly spread.}

\begin{prop}\label{prop: Collusion_and_NE_same_x}
If \(\bar{x}^t = d/T, \quad \forall \ t\in [T]\) then 
\begin{align*}
x^{\ast t} &= \frac{d}{T} - \sum_{t'\in [T]} \frac{T \delta^{tt'}-1}{T} \left( b(\mathbf{1}_{\playerSet}\mathbf{1}_{\playerSet}^\top + I_N) + \mu \right)^{-1} \mathbf{1}_N a^{t'},\\
    x^{\dagger t} &= \frac{d}{T} - \sum_{t'\in [T]} \frac{T \delta^{tt'}-1}{T} \left( b(\mathbf{1}_{\playerSet}\mathbf{1}_{\playerSet}^\top + {C}) + \mu \right)^{-1} \mathbf{1}_N a^{t'}.
\end{align*}
\end{prop}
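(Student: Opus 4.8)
The plan is to derive both identities as immediate specializations of the closed-form equilibrium expressions already in hand, namely equation \eqref{eq: ColEq} from Theorem \ref{thm: CollusionEquilibrium} for $x^\dagger$ and the formula in Corollary \ref{cor: NE} for $x^\ast$, rather than re-deriving and re-solving the KKT system \eqref{eq: MatrixEq}. Existence and uniqueness of $x^\dagger$ (and hence of $x^\ast$) are inherited from Theorem \ref{thm: CollusionEquilibrium}, and the invertibility of the matrix $b(\mathbf{1}_{\playerSet}\mathbf{1}_{\playerSet}^\top + \textbf{C}) + \mu$ is the content of Lemma \ref{prop:inv}, so nothing on that front needs to be re-established.

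The computation I would carry out is the following. Substitute the hypothesis $\bar{x}^{t'} = d/T$ for every $t'\in[T]$ into \eqref{eq: ColEq}. Since $\mu$ is the fixed diagonal matrix $\textsf{diag}(\mu_1,\dots,\mu_{\playerSet})$, this replaces the vector $\mu\bar{x}^{t'} - a^{t'}\mathbf{1}_{\playerSet}$ inside the sum by $\tfrac{1}{T}\mu d - a^{t'}\mathbf{1}_{\playerSet}$, and the summand splits into a $t'$-independent piece (the $\tfrac{1}{T}\mu d$ term, which is hit only by the scalar weight $\tfrac{T\delta^{tt'}-1}{T}$ and the fixed inverse matrix) and a genuinely $t'$-dependent piece (the $a^{t'}$ term). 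The key observation is the summation identity $\sum_{t'\in[T]}\bigl(T\delta^{tt'}-1\bigr) = T\cdot 1 - T = 0$; consequently the $\tfrac{1}{T}\mu d$ contribution sums to zero, and what remains is exactly
\[
x^{\dagger t} = \frac{d}{T} - \sum_{t'\in [T]} \frac{T \delta^{tt'}-1}{T}\bigl(b(\mathbf{1}_{\playerSet}\mathbf{1}_{\playerSet}^\top + \textbf{C}) + \mu\bigr)^{-1}\mathbf{1}_{\playerSet}\,a^{t'},
\]
which is the asserted form for $x^\dagger$. Repeating the identical argument on the formula in Corollary \ref{cor: NE} — equivalently, specializing the above by setting $\textbf{C} = I_N$, i.e. taking the coalition to be a singleton — yields the stated expression for $x^\ast$.

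I do not expect a genuine obstacle here: the proposition is purely a substitution lemma, and the only nontrivial step is recognizing the vanishing of the constant-in-time term via $\sum_{t'}(T\delta^{tt'}-1)=0$. For completeness one could instead plug $\bar{x}^t = d/T$ directly into the KKT system \eqref{eq: MatrixEq} and re-run the elimination of the multiplier $\lambda$ as in the proof of Theorem \ref{thm: CollusionEquilibrium}; this would reproduce the same cancellation but with strictly more bookkeeping, so the specialization route is the one I would present.
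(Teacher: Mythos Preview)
Your proposal is correct and matches the paper's approach: the paper states Proposition \ref{prop: Collusion_and_NE_same_x} without proof, treating it (like Proposition \ref{prop: Collusion_and_NE_same_a}) as an immediate specialization of Theorem \ref{thm: CollusionEquilibrium} and Corollary \ref{cor: NE}. The vanishing-sum identity $\sum_{t'\in[T]}(T\delta^{tt'}-1)=0$ is exactly the mechanism the paper relies on throughout (cf.\ the use of $\sum_{t}F^t=0$ and $\sum_{t}G^t=0$ in the proofs of Theorems \ref{thm: WelfareComparison} and \ref{theorem: uniform_demand_comparison}), so your derivation is the intended one.
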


\begin{theorem}\label{theorem: uniform_demand_comparison}
    Suppose \(\bar{x}^t = d/T,\) for every \(t\in [T]\). 
   Any group \(\mathcal{S}\subseteq[N]\) incurs a lower cost in Nash equilibrium when compared to \(\collusion-\)Nash equilibrium if and only if\(
    g^\dagger_{\mathcal{S}}(\mu,b) - g^{\ast}_{\mathcal{S}}(\mu,b) \geq (\Gamma^\dagger - \Gamma^\ast)h(A),\)
    where for every \(i\in [N],\)
    \begin{align*}
    g^{\dagger}_\mathcal{S}(\mu,b) &:= \frac{b}{T}\sum_{i\in\mathcal{S}}\Gamma^\dagger\Gamma^\dagger_i+\frac{\mu^i}{2T}(\Gamma_i^\dagger)^2 \\
    g^{\ast}_{\mathcal{S}}(\mu,b) &:= \frac{b}{T}\sum_{i\in\mathcal{S}}\Gamma^\ast\Gamma^\ast_i+\frac{\mu^i}{2T}(\Gamma_i^\ast)^2\\
    h(A) &:= \frac{\sum_{t,t'\in [T]}a^ta^{t'}\left({T \delta^{tt'}-1}\right)}{\sum_{t\in [T]}\left(\sum_{t'\in [T]}a^{t'} \left({T \delta^{tt'}-1}\right)\right)^2}, \\
       {\Gamma}_i^\ast &:= \left((b(1_N1_N^\top+ I_N)+\mu)^{-1}\mathbf{1}_{N}\right)_i, \\ 
       \Gamma^\dagger_i &:= \left(\left( b(\mathbf{1}_{\playerSet}\mathbf{1}_{\playerSet}^\top + {C}) + \mu \right)^{-1} \mathbf{1}_{\playerSet}\right)_i. 
\end{align*}
Additionally, $\Gamma^\ast := \sum_{i\in [N]}\Gamma_i^\ast$ and $\Gamma^\dagger := \sum_{i\in [N]}\Gamma_i^\dagger$.
\end{theorem}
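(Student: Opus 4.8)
The plan is to mirror the proof of Theorem \ref{thm: WelfareComparison} but specialize to the case where the desired-demand term vanishes (in the sense that $\bar x^t = d/T$ is already uniform) and the exogenous price term $a^t$ drives the correction. First I would start from the inequality $\sum_{i\in\mathcal S}c_i(x^\ast)\le \sum_{i\in\mathcal S}c_i(x^\dagger)$, which by definition is equivalent to ``$\mathcal S$ prefers Nash.'' The task is then to compute both sides in closed form using the equilibrium expressions in Proposition \ref{prop: Collusion_and_NE_same_x}.

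The key computational step is to substitute $x^{\dagger t} = d/T - \frac{1}{T}\sum_{t'}(T\delta^{tt'}-1)\,(b(\mathbf 1_N\mathbf 1_N^\top + C)+\mu)^{-1}\mathbf 1_N a^{t'}$ into \eqref{eq: cost_player_i}. Writing $\Gamma^\dagger_i$ for the $i$th coordinate of $(b(\mathbf 1_N\mathbf 1_N^\top+C)+\mu)^{-1}\mathbf 1_N$, and setting $G^t := \sum_{t'}(T\delta^{tt'}-1)a^{t'}$, one gets $x^{\dagger t}_i = d_i/T - \frac{1}{T}G^t\Gamma^\dagger_i$, so that $\mathbf 1_N^\top x^{\dagger t} = D/T - \frac{1}{T}G^t\Gamma^\dagger$ and $x^{\dagger t}_i - \bar x^t_i = -\frac{1}{T}G^t\Gamma^\dagger_i$. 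Plugging these into $c_i(x^\dagger) = \sum_t (a^t + b\,\mathbf 1_N^\top x^{\dagger t})x^{\dagger t}_i + \frac{\mu_i}{2}\|x^\dagger_i-\bar x_i\|^2$, the cross terms that are linear in $G^t$ will need to be handled: here $\sum_t G^t = \sum_t\sum_{t'}(T\delta^{tt'}-1)a^{t'} = T a^t\big|_{\text{summed}} - T\sum_{t'}a^{t'}$ wait --- more carefully $\sum_t G^t = \sum_{t'}a^{t'}\sum_t(T\delta^{tt'}-1) = \sum_{t'}a^{t'}(T-T)=0$, so the purely linear-in-$G$ terms drop, exactly as $\sum_t F^t=0$ did in Theorem \ref{thm: WelfareComparison}. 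What remains are: a $G$-independent piece (common to $x^\dagger$ and $x^\ast$, hence cancels in the comparison), a term proportional to $\sum_t (G^t)^2$ coming from $b(\mathbf 1^\top x^t)x^t_i$ and from $\frac{\mu_i}{2}\|\cdot\|^2$, and crucially a term proportional to $\sum_t a^t G^t = \sum_{t,t'}a^t a^{t'}(T\delta^{tt'}-1)$ coming from the $\sum_t a^t x^{\dagger t}_i$ part of the monetary cost — this last term is linear in $\Gamma^\dagger_i$, not quadratic, and is what produces the right-hand side $(\Gamma^\dagger-\Gamma^\ast)h(A)$ after dividing through by $\sum_t(G^t)^2$.

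Concretely, after collecting terms, $\sum_{i\in\mathcal S}c_i(x^\dagger) = (\text{terms independent of the coalition structure}) + \frac{\sum_t(G^t)^2}{T^2}\big(\sum_{i\in\mathcal S} b\Gamma^\dagger\Gamma^\dagger_i + \frac{\mu_i}{2}(\Gamma^\dagger_i)^2\big) - \frac{\sum_t a^t G^t}{T}\,\Gamma^\dagger$, and similarly for $x^\ast$ with $\Gamma^\dagger\mapsto\Gamma^\ast$, $C\mapsto I_N$. Subtracting, the coalition-independent parts cancel, and $\sum_{i\in\mathcal S}c_i(x^\ast)\le\sum_{i\in\mathcal S}c_i(x^\dagger)$ becomes, after multiplying by $T^2/\sum_t(G^t)^2 > 0$ (note $\sum_t (G^t)^2 > 0$ provided $a^t$ is nonconstant; the degenerate constant case makes both equilibria coincide and the statement is vacuous), the inequality $g^\dagger_{\mathcal S}(\mu,b) - g^\ast_{\mathcal S}(\mu,b) \ge (\Gamma^\dagger-\Gamma^\ast)\,h(A)$, once one identifies $h(A) = \frac{\sum_{t,t'}a^ta^{t'}(T\delta^{tt'}-1)}{\sum_t(G^t)^2}$ with $G^t = \sum_{t'}(T\delta^{tt'}-1)a^{t'}$, matching the stated formula. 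The factor-of-$T$ bookkeeping between the $g$ definitions (which carry $1/T$) and the $1/T^2$ prefactors must be tracked carefully.

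I expect the main obstacle to be purely organizational rather than conceptual: carefully expanding $c_i(x^\dagger)$, correctly separating the three types of terms ($G$-independent, quadratic in $G$, and the single $\sum_t a^tG^t$ cross term), and verifying that every piece not involving $C$ versus $I_N$ truly cancels between the $\dagger$ and $\ast$ sides — in particular that $D/T$-type and $\sum_t a^tG^t$-with-the-$d_i/T$-factor-type contributions are identical on both sides because they do not see the coalition. A secondary subtlety is confirming the sign and nonvanishing of $\sum_t (G^t)^2$ so that dividing through preserves the inequality direction; this is where one should note that if $A$ is constant in $t$ then $G^t\equiv 0$, both equilibria reduce to $d/T$, costs are equal, and the claimed equivalence degenerates gracefully. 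Modulo these checks, the argument is a direct quadratic-form computation exactly parallel to Theorem \ref{thm: WelfareComparison}.
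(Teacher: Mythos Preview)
Your proposal is correct and follows essentially the same route as the paper's proof: define $G^t=\sum_{t'}(T\delta^{tt'}-1)a^{t'}$, use $\sum_t G^t=0$ to kill the linear-in-$G$ cross terms, expand $c_i(x^\dagger)$ and $c_i(x^\ast)$ into a coalition-independent part, a quadratic-in-$G$ part, and the $\sum_t a^tG^t$ cross term, then subtract and rearrange. One small slip to watch in your write-up: in the expression for $\sum_{i\in\mathcal S}c_i(x^\dagger)$ the coefficient on $\frac{\sum_t a^tG^t}{T}$ should be $\sum_{i\in\mathcal S}\Gamma^\dagger_i$, not $\Gamma^\dagger$ (the paper's proof also carries $\sum_{i\in\mathcal S}\Gamma^\dagger_i$ there), so track that when matching to the stated condition.
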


\begin{proof}
The proof is analogous to that of Theorem \ref{thm: WelfareComparison} and is deferred to Appendix.
\end{proof}
\begin{remark}
The condition in Theorem \ref{theorem: uniform_demand_comparison} does not depend on the heterogeneity of demand of charging stations $d$.
\end{remark}

\begin{corollary}
   When \(c_i(x^\dagger), c_i(x^\ast)\geq 0 \; \forall i \in [N]\), Theorem \ref{theorem: uniform_demand_comparison} can be used to analyze (\ref{eq: Eval-Metric}) as follows.
   \begin{align*} \textsf{M}_{\mathcal{S}} \leq 1  \iff g_{\mathcal{S}}^{\dagger}(\mu, b) - g_{\mathcal{S}}^{*}(\mu, b) \geq (\Gamma^{\dagger} - \Gamma^*) h(A) \\
   \forall \mathcal{S}  \in \{ [N], \mathcal{C}, \mathcal{[N] \backslash \mathcal{C}}\}. \end{align*}
\end{corollary}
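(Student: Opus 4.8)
The plan is to mirror the proof of Theorem~\ref{thm: WelfareComparison}: derive closed forms for $c_{\mathcal{S}}(x^{\ast})$ and $c_{\mathcal{S}}(x^{\dagger})$ from Proposition~\ref{prop: Collusion_and_NE_same_x}, form their difference, and read off which terms survive.

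First I would introduce the price-deviation quantity $G^{t} := \sum_{t'\in[T]}(T\delta^{tt'}-1)\,a^{t'} = T a^{t} - \sum_{t'\in[T]} a^{t'}$, the analogue of $F^{t}$ in the proof of Theorem~\ref{thm: WelfareComparison}; it satisfies $\sum_{t\in[T]} G^{t} = 0$. Proposition~\ref{prop: Collusion_and_NE_same_x} then reads, entrywise, as $x_{i}^{\dagger t} = d_{i}/T - (G^{t}/T)\,\Gamma_{i}^{\dagger}$, whence $\mathbf{1}_{N}^{\top} x^{\dagger t} = D/T - (G^{t}/T)\,\Gamma^{\dagger}$ with $D := \sum_{i\in[N]} d_{i}$, and, since $\bar{x}_{i}^{t} = d_{i}/T$, $x_{i}^{\dagger t} - \bar{x}_{i}^{t} = -(G^{t}/T)\,\Gamma_{i}^{\dagger}$; the same identities hold for $x^{\ast}$ with $\Gamma^{\ast}$ in place of $\Gamma^{\dagger}$.

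Next I would substitute these into $c_{i}(x) = \sum_{t}\bigl(a^{t} + b\,\mathbf{1}_{N}^{\top} x^{t}\bigr)x_{i}^{t} + \tfrac{\mu_{i}}{2}\norm{x_{i} - \bar{x}_{i}}^{2}$ and expand. Because $\sum_{t} G^{t} = 0$, every contribution that is a constant (independent of $t$) times $\sum_{t} G^{t}$ vanishes, whereas $\sum_{t} a^{t} G^{t} = \sum_{t,t'} a^{t} a^{t'}(T\delta^{tt'}-1) =: \mathcal{H}$ and $\sum_{t}(G^{t})^{2} =: \mathcal{G}$ persist; this gives
\[
c_{i}(x^{\dagger}) = \frac{d_{i}\sum_{t} a^{t}}{T} + \frac{bDd_{i}}{T} - \frac{\mathcal{H}}{T}\,\Gamma_{i}^{\dagger} + \frac{\mathcal{G}}{T^{2}}\Bigl( b\,\Gamma^{\dagger}\Gamma_{i}^{\dagger} + \tfrac{\mu_{i}}{2}(\Gamma_{i}^{\dagger})^{2}\Bigr),
\]
and likewise with $\dagger \mapsto \ast$. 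Summing over $i\in\mathcal{S}$ and subtracting, the first two summands are coalition-independent and cancel, and, using $T g^{\dagger}_{\mathcal{S}}(\mu,b) = \sum_{i\in\mathcal{S}}\bigl(b\,\Gamma^{\dagger}\Gamma^{\dagger}_{i} + \tfrac{\mu_{i}}{2}(\Gamma^{\dagger}_{i})^{2}\bigr)$ (and similarly for $g^{\ast}_{\mathcal{S}}$), the quadratic terms reassemble, yielding
\[
c_{\mathcal{S}}(x^{\dagger}) - c_{\mathcal{S}}(x^{\ast}) = \frac{\mathcal{G}}{T}\bigl(g^{\dagger}_{\mathcal{S}}(\mu,b) - g^{\ast}_{\mathcal{S}}(\mu,b)\bigr) - \frac{\mathcal{H}}{T}\sum_{i\in\mathcal{S}}\bigl(\Gamma_{i}^{\dagger} - \Gamma_{i}^{\ast}\bigr),
\]
where $\sum_{i\in\mathcal{S}}(\Gamma_{i}^{\dagger} - \Gamma_{i}^{\ast})$ equals $\Gamma^{\dagger} - \Gamma^{\ast}$ when $\mathcal{S}=[N]$. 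Then $c_{\mathcal{S}}(x^{\ast}) \le c_{\mathcal{S}}(x^{\dagger})$ is equivalent to the right-hand side above being nonnegative; dividing by $\mathcal{G}/T > 0$ and recognizing $\mathcal{H}/\mathcal{G} = h(A)$ gives the asserted condition.

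I expect the only delicate points to be the bookkeeping — cleanly separating coalition-dependent pieces (those carrying $\Gamma^{\dagger}$ or $\Gamma^{\ast}$) from coalition-independent ones ($\sum_{t}a^{t}$, $D$, $\mathcal{G}$, $\mathcal{H}$), so that precisely the intended terms cancel in the difference — and the edge case in which all $a^{t}$ coincide, where $\mathcal{G}=\mathcal{H}=0$, both equilibria reduce to the uniform profile $d/T$, and the comparison is an equality (so division by $\mathcal{G}$ must be excluded). Beyond these there is no conceptual obstacle: the argument is a routine variant of the Case~A computation, now with $a^{t}$ playing the role that $\bar{x}^{t}$ played there.
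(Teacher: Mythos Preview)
Your derivation coincides with the paper's own proof of Theorem~\ref{theorem: uniform_demand_comparison} (of which this Corollary is an immediate consequence once the costs are nonnegative): the same auxiliary $G^{t}$, the same use of $\sum_{t}G^{t}=0$ to kill the cross terms, and the same identification $h(A)=\mathcal{H}/\mathcal{G}$. Your remark that the linear term in the difference carries $\sum_{i\in\mathcal{S}}(\Gamma_{i}^{\dagger}-\Gamma_{i}^{\ast})$, which collapses to $\Gamma^{\dagger}-\Gamma^{\ast}$ only when $\mathcal{S}=[N]$, is in fact more careful than the paper, whose final line writes $\Gamma^{\dagger}-\Gamma^{\ast}$ for every $\mathcal{S}$ without comment.
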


\begin{figure}[!htbp]
  \centering
  \begin{minipage}[b]{0.6\textwidth}
    \includegraphics[width=\textwidth]{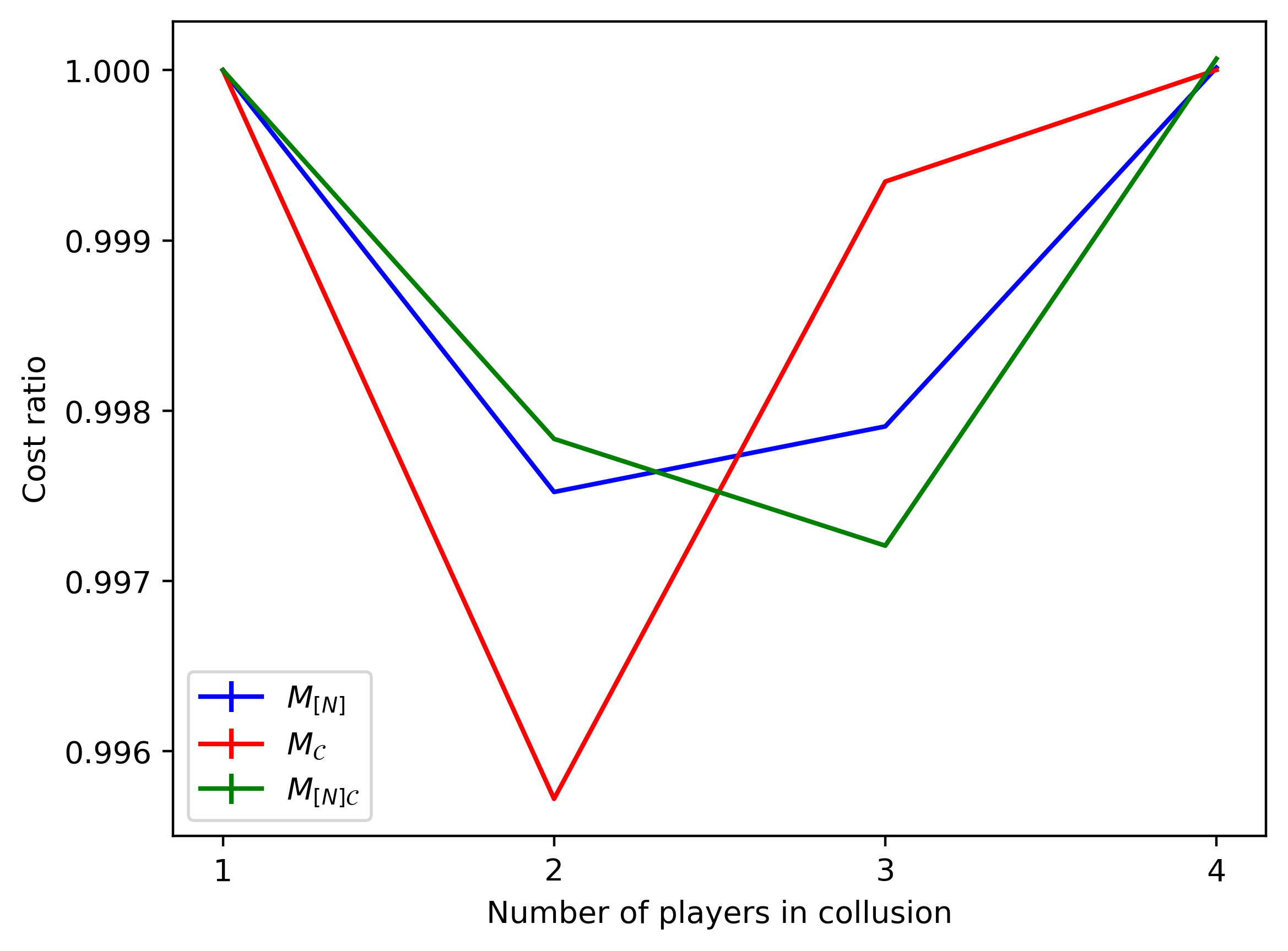}
    \caption{Setting \(\eta_1=0.4/T, \eta_2 = 1.6/T\) and \(\delta=0\)}
    \label{fig:figure1}
  \end{minipage}
  
  
  \begin{minipage}[b]{0.6\textwidth}
    \includegraphics[width=\textwidth]{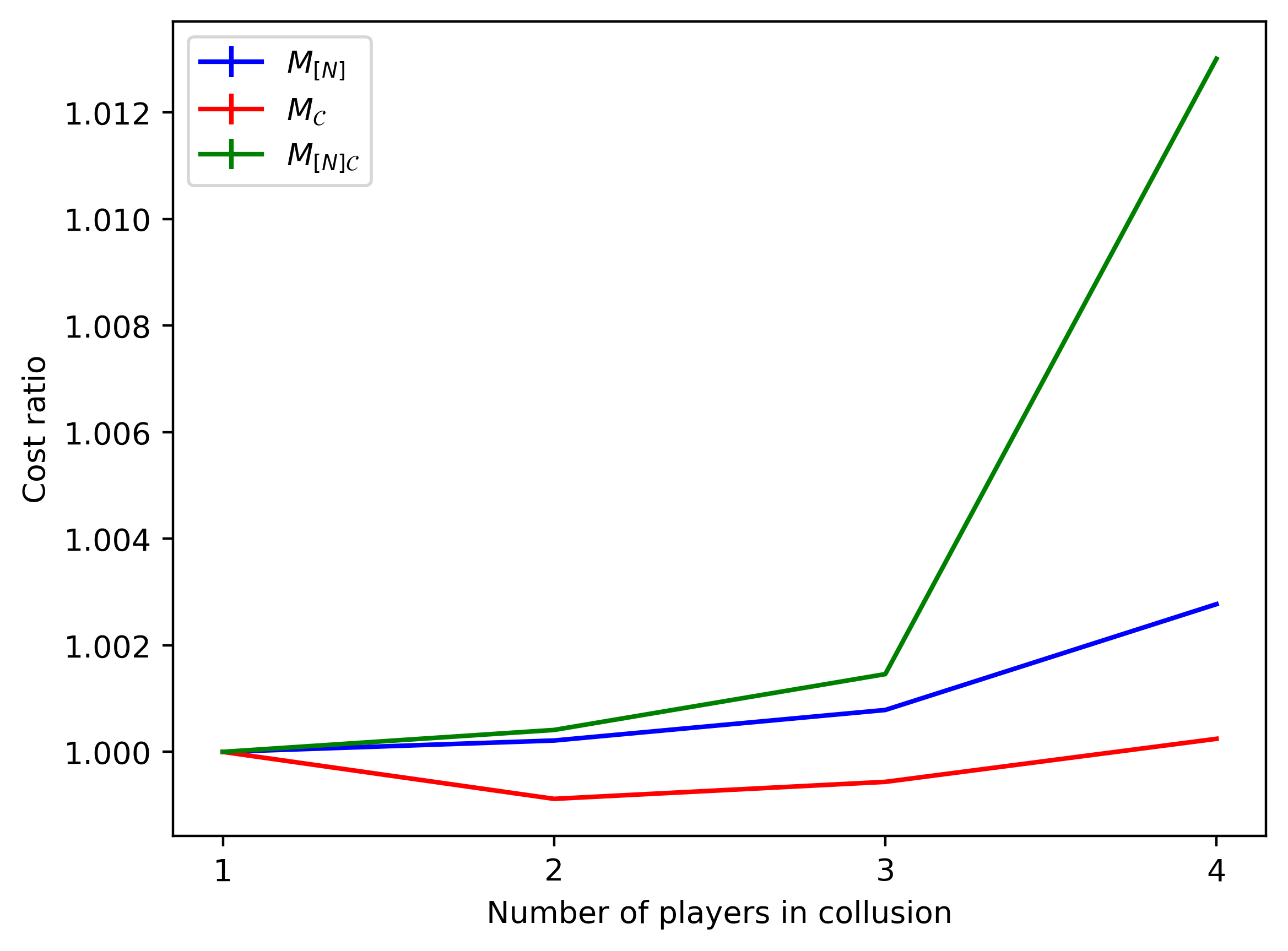}
    \caption{Setting \(\eta_1=1/T, \eta_2 = 1/T\) and \(\delta=0.4\)}
    \label{fig:figure2_a_uniform}
  \end{minipage}
\end{figure}

\subsection{Coalitions may not be always beneficial.}\label{ssec: CoalitionBenefit}
In this subsection, we construct instances where the formation of a coalition is not beneficial, which satisfy the setup discussed in Section \ref{ssec: WhenCNashBetter}.  We set \(N=5, T = 10, b=0.5\) and consider that each station can be of two types: \textsf{Type H}
 or \textsf{Type L}. For the purpose of this example, we consider that all stations in a coalition are of \(\textsf{Type L}\) and all stations outside the coalition are of \(\textsf{Type H}\). A station $i\in [N]$ is said to be \textsf{Type H} if \(d_i = 5\) and \(\mu_i = 1\), and of \textsf{Type L} if \(d_i = 1\) and \(\mu_i = 0.1\). Additionally, we set \(\alpha^t =\eta_1 \) for \(t\leq T/2 \), and \(\eta_2\) otherwise. Furthermore, we set \(a_t = 0.5 + \delta \nu_t \) where \(\nu_t\sim\textsf{Unif}([0,1])\).  
  We study the impact of the size of the coalition on various metrics presented in \eqref{eq: Eval-Metric}. Particularly, in Figure \ref{fig:figure1}, we study the case of \(\eta_1 = 0.4/T, \eta_2 = 1.6/T\) and \(\delta = 0\), which is aligned with the setup considered in Theorem \ref{thm: WelfareComparison}. Meanwhile, in Figure \ref{fig:figure2_a_uniform}, we study the case by setting \(\eta_1 =1/T, \eta_2 = 1/T\) and \(\delta = 0.4\), which is aligned with the setup considered in Theorem \ref{theorem: uniform_demand_comparison}. From these figures we conclude that there exist coalitions, where from the coalition's perspective (i.e. \(\textsf{M}_\collusion\)) the outcome under Nash equilibrium is preferred to the outcome under coordination. Furthermore, in Figure \ref{fig:figure1}, we find that there exist instances when the \(\collusion-\)Nash equilibrium is not preferred under every evaluation metric. Moreover, in Figure \ref{fig:figure2_a_uniform}, we find that the formation of coalition not only adversely impacts the coalition but also provides advantage to stations outside coalitions.

\section{Discussion}
In this section we expand on the experimental results from Section \ref{ssec: CoalitionBenefit}\footnote{The code to generate all figures in this section is available at \href{https://github.com/kkulk/coalition-ev}{https://github.com/kkulk/coalition-ev}} by relaxing the conditions imposed on game parameters.
\paragraph{Impact of simultaneous variations in \(a^t\) and \(\alpha^t\)} 
 In Figure \ref{fig:enter-label}, we study the impact of the size of the coalition in terms of \eqref{eq: Eval-Metric}. In contrast to Section \ref{ssec: CoalitionBenefit}, we consider both \(a^t\) and \(\alpha^t\) to be non-uniform and randomly assign stations to be of \textsf{Type H} with probability 0.2 if they are within the coalition. All stations outside of the coalition are assigned to be \textsf{Type L}. We find that if the size of coalition is small then Nash equilibrium turns out to be favorable along all metrics \eqref{eq: Eval-Metric}. However, as the size of coalition increases then it may be beneficial to form a coalition.

\begin{figure}
    \centering
    \includegraphics[width=0.6\textwidth]{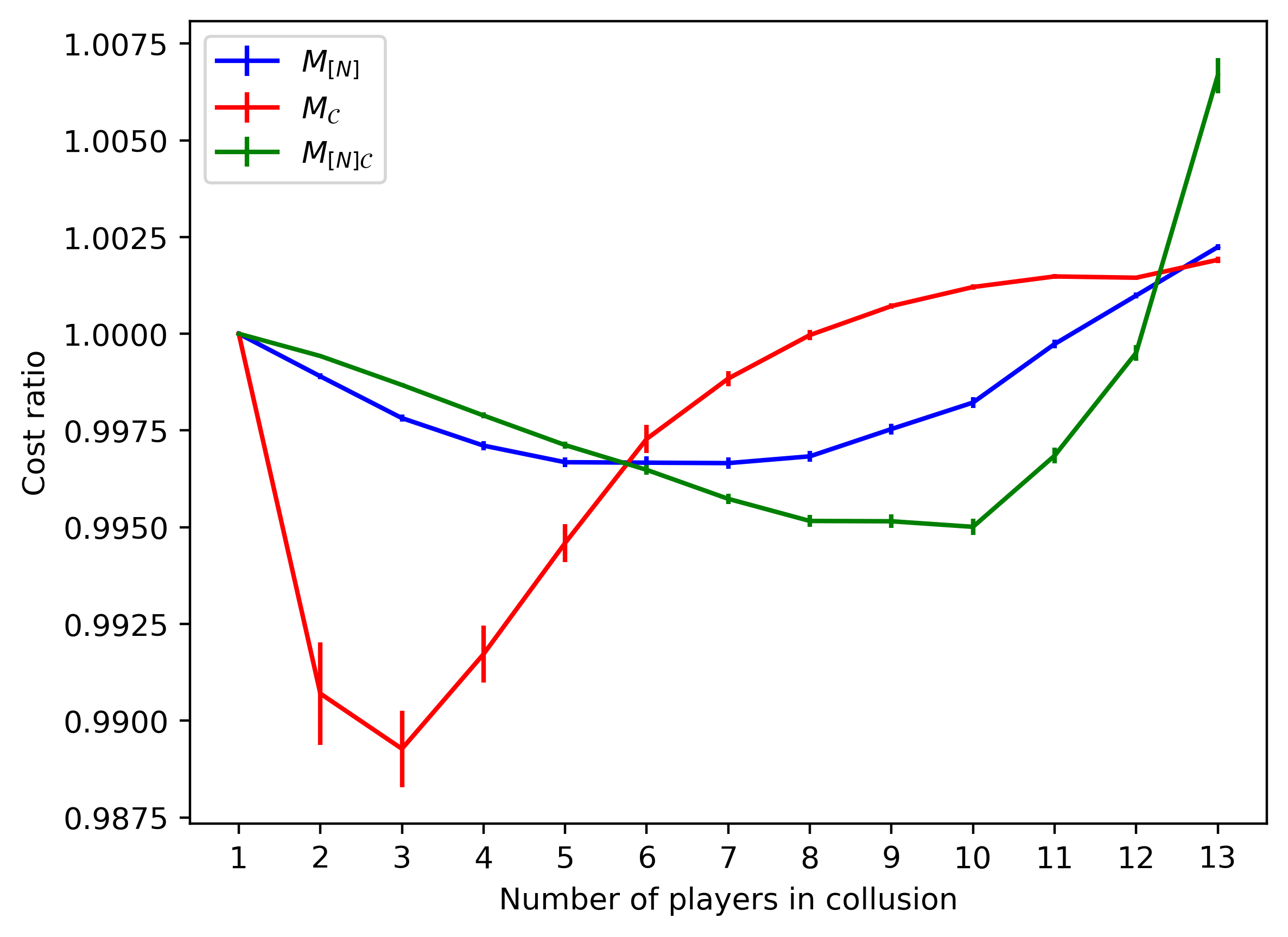}
    \caption{Comparison between Nash equilibrium and \(\collusion-\)Nash equilibrium with respect to \eqref{eq: Eval-Metric} under different size of coalition.}
    \label{fig:enter-label}
\end{figure}

\paragraph{Impact of the composition of coalitions} 
Here we show that not only the size of the coalition, but also the composition of the coalition, plays an important role in deciding whether the coalition is beneficial. To illustrate this point, we examine an example with \(N=3, T =10, b= 1\) and \(a^t = 10\) for all \(t\in [T]\). We posit a scenario where the first two stations form a coalition. Each station is one of two types, namely \textsf{Type H}
 or \textsf{Type L}. We call a station $i\in [N]$ to be of \textsf{Type H} if \(d_i = 5\) and \(\mu_i = 5\), and of \textsf{Type L} if \(d_i = 1\) and \(\mu_i = 1\). In Figure \ref{fig: ComparisonTables}, we present a comparative analysis, evaluating how different coalitions perform in terms of metric presented in \eqref{eq: Eval-Metric}. 
We find under some circumstances \(\textsf{M}_{\mathcal{S}} > 1\) for all \(\mathcal{S}\in \{[N], \collusion, [N]\backslash[\collusion]\}\). For instance, this is the case when the coalition is comprised of atleast one \textsf{Type H} station and the station outside coalition is of \textsf{TypeH}. On the contrary, perhaps surprisingly, there are also instances when \(\textsf{M}_{\mathcal{S}} < 1\) for all \(\mathcal{S}\in \{[N], \collusion, [N]\backslash[\collusion]\}\). For instance, this is the case when the stations within a coalition are of \textsf{Type H} and the station outside the coalition is of \textsf{Type L}.

\begin{figure}
\centering
\begin{subfigure}[b]{0.5\textwidth}
   \includegraphics[width=1\linewidth]{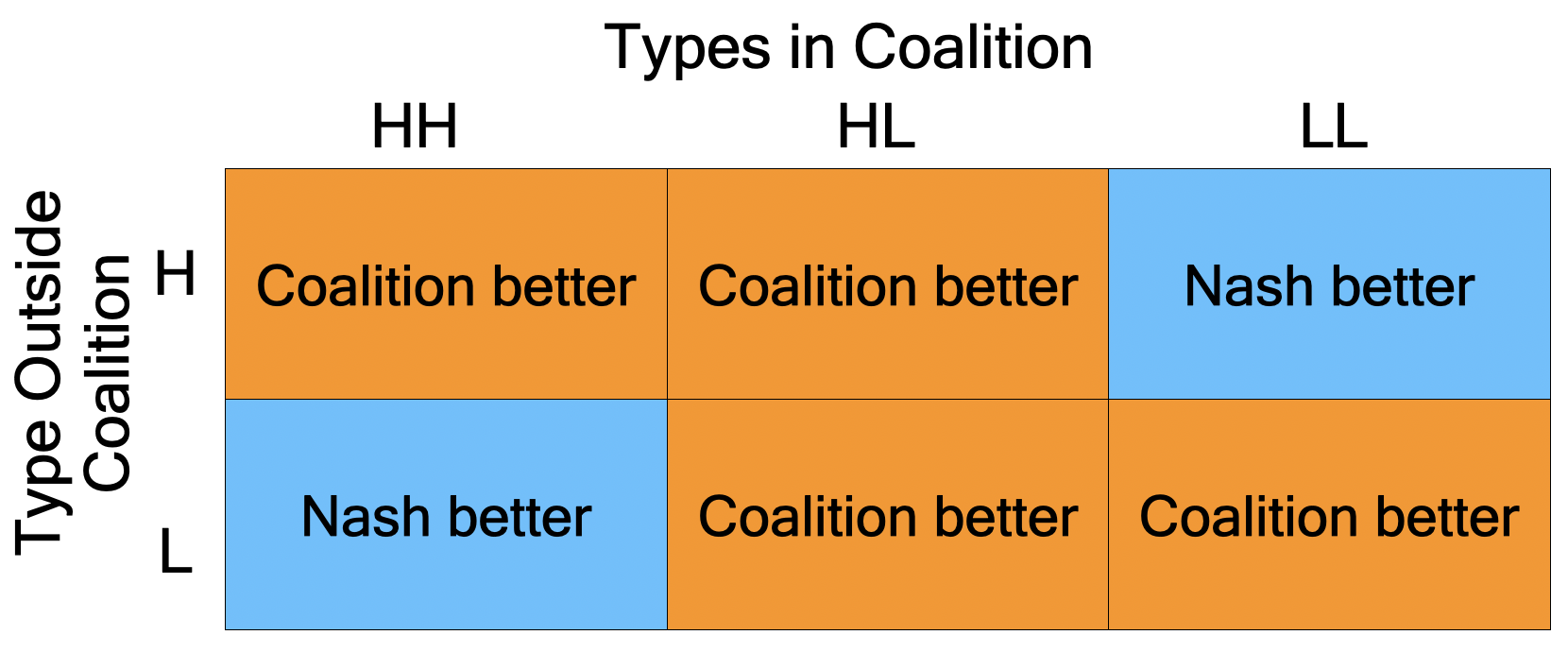}
   \caption{The phrase ``Coalition better'' (resp. ``Nash better'') implies \(\textsf{M}_{[N]} > 1\) (resp. \(\textsf{M}_{[N]} < 1\)).}
   \label{fig:WelfareComparison} 
\end{subfigure}

\begin{subfigure}[b]{0.5\textwidth}
   \includegraphics[width=1\linewidth]{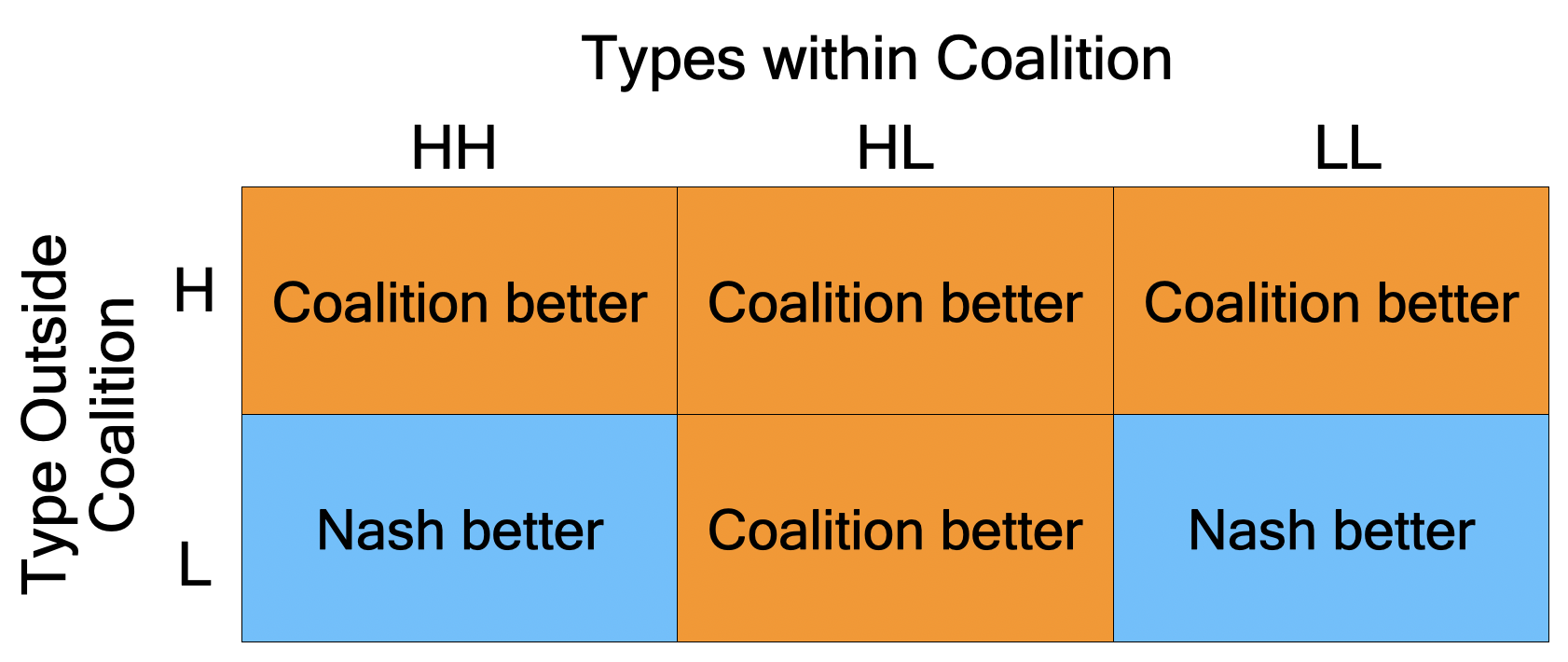}
   \caption{The phrase ``Coalition better'' (resp. ``Nash better'') implies \(\textsf{M}_{\collusion} > 1\) (resp. \(\textsf{M}_{\collusion} < 1\)).}
   \label{fig:CollusionComparison}
\end{subfigure}

\begin{subfigure}[b]{0.5\textwidth}
   \includegraphics[width=1\linewidth]{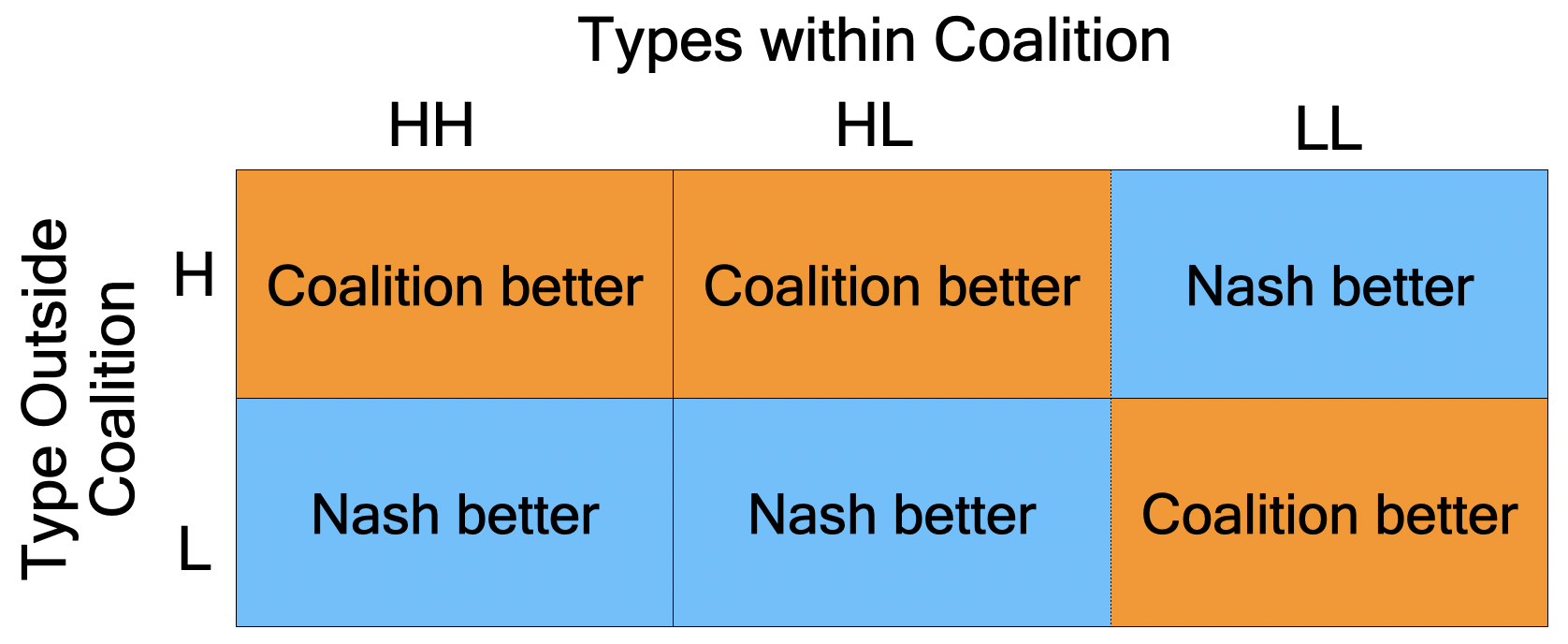}
   \caption{The phrase ``Coalition better'' (resp. ``Nash better'') implies \(\textsf{M}_{[N]\backslash\collusion} > 1\) (resp. \(\textsf{M}_{[N]\backslash\collusion} < 1\)).}
   \label{fig:NonCollusionComparison}
\end{subfigure}
\caption{Comparison of societal (overall) cost, coalitional cost, and the cost of the station outside the coalition in the three station game.}
\label{fig: ComparisonTables}
\end{figure}

\section{Conclusion}
In this work, we initiate a study to understand the impact of coalitions between charging stations as charging infrastructure continues to grow in coming years. As charging stations draw a substantial amount of electricity from the grid, they will be ``price-makers" on the electricity grid. More often than not, multiple charging stations are operated by same electric vehicle charging company (EVCC), which could facilitate coordination between charging stations. In this work, we analytically characterize the equilibrium outcome in the presence of coalitions. Our analysis hints at potential losses encountered by EVCCs if they coordinate all charging stations owned by them in the presence of heterogeneity in charging demand and user preferences. 

There are several interesting questions for future research in understanding the impact of coalitions between charging stations. In the current model we assume the overall demand of a charging station is fixed, but this demand could be affected by electricity prices \cite{lee2014electric, escudero2012charging, yuan2015competitive}. We also assume the price functions are linear; future work may extend this to generic nonlinear functions. Furthermore, there are additional operational constraints such as limited capacity of the energy infrastructure and bounds on charging rates of EVs that could be accounted for while computing the equilibrium.

\bibliography{refs}
\bibliographystyle{plainnat}
\newpage
\appendix
\section{Excluded proofs from main text}
\subsection{Proof of Lemma \ref{prop:inv}}\label{app: proof_lemma}
Note that \(\Theta =  I_T\otimes \left(b(\mathbf{1}_{\playerSet}\mathbf{1}_{\playerSet}^\top+\textbf{C}) + \mu \right)\) is positive definite if and only if each of the terms in the Kronecker product is positive definite \cite{schacke2004kronecker}. Hence, it is sufficient to show that $Q := b(\mathbf{1}_{N}\mathbf{1}_N^\top + \textbf{C}) + \mu$
is a positive definite matrix. 
First, note that $Q$ is a symmetric matrix. Additionally, for any \(z\in \R^N\), it holds that 
\begin{align*}
    z^\top Qz  
    =& \; b (\mathbf{1}_N^\top z)^2 + b(\sum_{i\in\collusion}z_i)^2 + \sum_{i\in \collusion}\mu_iz_i^2  +\sum_{i\in N\backslash\collusion}(b+\mu_i)z_i^2, 
\end{align*}
which is strictly positive for all \(z\neq 0\). 

Next, we show that \(\Gamma\) is invertible by calculating its inverse. Indeed, 
\begin{align*}
    \Gamma^{-1} =& \left((\mathbf{1}_T^\top \otimes I_{\playerSet})\Theta^{-1} (\mathbf{1}_T\otimes I_\playerSet)\right)^{-1} \\
    =& \left( \mathbf{1}_T^\top \mathbf{1}_T \otimes \left( b(\mathbf{1}_{\playerSet}\mathbf{1}_{\playerSet}^\top + C) + \mu \right)^{-1} \right)^{-1} \\
    =& \frac{1}{T}\left( b(\mathbf{1}_{\playerSet}\mathbf{1}_{\playerSet}^\top + C) + \mu \right) =\frac{1}{T} Q.
\end{align*}
This completes the proof. 

\subsection{Proof of Theorem \ref{theorem: uniform_demand_comparison}}
    For every \(t\in [T]\), define \(G^t := \sum_{t'\in [T]}a^{t'} \left({T \delta^{tt'}-1}\right)\). Using this, the expression for $x^{\dagger t}$ from Proposition \ref{prop: Collusion_and_NE_same_x} can be re-written as follows
    \begin{align}
        x^{\dagger t} =\frac{d}{T} -  \frac{1}{T}\left( b(\mathbf{1}_{\playerSet}\mathbf{1}_{\playerSet}^\top + {C}) + \mu \right)^{-1} \mathbf{1}_{\playerSet} G^t.
    \end{align}
Consequently, it holds that  
\begin{equation}
\begin{aligned}
x^{\dagger t}_i &= \frac{d_i}{T} - \frac{G^t}{T}\Gamma^\dagger_i\\ 
    \mathbf{1}_N^\top x^{\dagger t} &= \frac{D}{T} - \frac{G^t}{T}\Gamma^\dagger \\ x^{\dagger t}_i - \bar{x}^t_i &= -\frac{1}{T}\Gamma^\dagger_i G^t. 
\end{aligned}
\end{equation}

Consequently, combining previous equations with \eqref{eq: cost_player_i}, for every \(i\in [N]\), 
\begin{align}\label{eq: IndividualCost_xbar_uniform}
&c_i(x^\dagger) 
    = \sum_{t\in [T]}a^tx^{\dagger t}_i + \frac{b}{T^2}
    \sum_{t\in [T]} \left(D-G^t\Gamma^\dagger \right)\left( d_i - G^t \Gamma^\dagger_i \right)
    \notag \\ 
    &\quad \quad + 
    \frac{\mu_i (\Gamma_i^\dagger)^2}{2T^2}\sum_{t\in [T]} (G^t)^2\notag \\ 
    &= \sum_{t\in [T]}a^t\left(\frac{d_i}{T} - \frac{G^t}{T}\Gamma^\dagger_i\right)+ \frac{b}{T^2}\left(TDd_i+ \Gamma^\dagger\Gamma_i^\dagger\sum_{t\in [T]}(G^t)^2\right)
  \notag \\& \quad \quad + 
    \frac{\mu_i(\Gamma^\dagger_i)^2}{2T^2}\sum_{t\in [T]} (G^t)^2,
\end{align}
where the last equality is because \(\sum_{t\in [T]}G^t = \sum_{t,t'\in [T]}a^{t'} \left({T \delta^{tt'}-1}\right) = 0.\)
Additionally, summing \eqref{eq: IndividualCost_xbar_uniform} for all \(i\in\mathcal{S}\), we obtain
\begin{align*}
&\sum_{i\in \mathcal{S}}c_i(x^\dagger) = \sum_{t\in [T]}a^t \left(\frac{D_{\mathcal{S}}}{T} - \frac{G^t}{T}\sum_{i\in\mathcal{S}}\Gamma^\dagger_i\right) + \frac{bDD_{\mathcal{S}}}{T} \\& +\frac{b\Gamma^\dagger\sum_{i\in \mathcal{S}}\Gamma^\dagger_i\sum_{t\in [T]}(G^t)^2}{T^2} + 
    \frac{b}{2T^2}\sum_{t\in [T]} (G^t)^2\sum_{i\in\mathcal{S}}\frac{\mu^i}{b}(\Gamma_i^\dagger)^2 \\ 
    &= \left(\frac{AD_{\mathcal{S}}-\sum_{i\in\mathcal{S}}\Gamma^\dagger_i\sum_{t\in [T]}a^tG^t}{T} + \frac{bDD_{\mathcal{S}}}{T}\right) \\&\hspace{2cm}+ \frac{b\sum_{t\in [T]}(G^t)^2}{T^2}\bigg(\Gamma^\dagger\sum_{i\in\mathcal{S}}\Gamma^\dagger_i+\sum_{i\in\mathcal{S}}\frac{\mu^i}{2b}(\Gamma_i^\dagger)^2\bigg),
\end{align*}
where \(A:= \sum_{t\in [T]}a^t\). 
Analogously, we obtain the cost at Nash equilibrium as follows 
\begin{align*}
\sum_{i\in \mathcal{S}}c_i(x^\ast)
    &=  \left(\frac{AD_{\mathcal{S}}-\sum_{i\in\mathcal{S}}\Gamma^\ast_i\sum_{t\in [T]}a^tG^t}{T} + \frac{bDD_{\mathcal{S}}}{T}\right) \\&+ \frac{b\sum_{t\in [T]}(G^t)^2}{T^2}\bigg(\Gamma^\ast\sum_{i\in\mathcal{S}}\Gamma^\ast_i+\sum_{i\in\mathcal{S}}\frac{\mu^i}{2b}(\Gamma_i^\ast)^2\bigg).
\end{align*}
Using the above equations, we obtain that  \(\sum_{i\in\mathcal{S}}c_i(x^\dagger) > \sum_{i\in\mathcal{S}}c_i(x^\ast)\) if and only if  \(g^\dagger_{\mathcal{S}}(\mu,b) - g^{\ast}_{\mathcal{S}}(\mu,b) \geq (\Gamma^\dagger - \Gamma^\ast)h(A)\). This completes the proof.

\end{document}